\numberwithin{equation}{section}
\newtheorem{theorem}{Theorem}[section]
\newtheorem{cor}[theorem]{Corollary}
\newtheorem{lemma}[theorem]{Lemma}
\newtheorem{remark}[theorem]{Remark}
\newtheorem{prop}[theorem]{Proposition}
\newtheorem{conj}[theorem]{Conjecture}
\newtheorem{defin}[theorem]{Definition}
\newcommand{\cM}{\mathcal{M}}
\newcommand{\cS}{\mathcal{S}}
\newcommand{\R}{\mathbb{R}}
\newcommand{\Rn}{\R^n}
\newcommand{\Rntk}{(\R^n)^{\otimes k}}
\newcommand{\Rpntk}{(\R_{\geq 0}^n)^{\otimes k}}
\newcommand*{\E}{\mathbb{E}}
\providecommand{\rank}{\operatorname{rank}}
\providecommand{\conv}{\operatorname{conv}}
\newcommand{\bone}{\mathbf{1}}
\newcommand*{\poly}{\mathrm{poly}}
\newcommand*{\eps}{\varepsilon}
\renewcommand{\leq}{\leqslant}
\renewcommand{\geq}{\geqslant}
\newcommand{\MOT}{\textsf{MOT}}
\newcommand{\MinO}{\textsf{MIN}}
\newcommand{\Cmax}{C_{\max}}
\newcommand{\Coup}{\cM(\mu_1,\dots,\mu_k)}
\newcommand{\Coupp}{\cM(\mu_1',\dots,\mu_k')}
\newcommand{\jvec}{\vec{j}}
\DeclareMathOperator{\Ber}{Ber}
\let\baraccent=\= %
\renewcommand{\=}[1]{\stackrel{#1}{=}} %
\providecommand{\RR}{\mathbb{R}}
\providecommand{\ZZ}{\mathbb{Z}}
\providecommand{\cM}{\mathcal{M}}
\providecommand{\eps}{\varepsilon}
\renewcommand{\P}{\mathsf{P}}
\providecommand{\NP}{\mathsf{NP}}
\providecommand{\BPP}{\mathsf{BPP}}
\mathchardef\mhyphen="2D %
\providecommand{\sm}{\setminus}
\newcommand{\interior}[1]{%
	{\kern0pt#1}^{\mathrm{o}}%
}
\newenvironment{fminipage}{\begin{Sbox}\begin{minipage}}{\end{minipage}\end{Sbox}\fbox{\TheSbox}}
\def\blfootnote{\gdef\@thefnmark{}\@footnotetext}
\begin{document}
	\title{Hardness results for Multimarginal Optimal Transport problems}
	\author{Jason M. Altschuler \and Enric Boix-Adser\`a}
	\date{}
	\maketitle

\blfootnote{The authors are with the Laboratory for Information and Decision Systems (LIDS), Massachusetts Institute of Technology, Cambridge MA 02139. Work partially supported by NSF Graduate Research Fellowship 1122374, a Siebel PhD Fellowship, and a TwoSigma PhD fellowship.}

\begin{abstract}
	Multimarginal Optimal Transport ($\MOT$) is the problem of linear programming over joint probability distributions with fixed marginals. A key issue in many applications is the complexity of solving $\MOT$: the linear program has exponential size in the number of marginals $k$ and their support sizes $n$. A recent line of work has shown that $\MOT$ is $\poly(n,k)$-time solvable for certain families of costs that have $\poly(n,k)$-size implicit representations. However, it is unclear what further families of costs this line of algorithmic research can encompass. In order to understand these fundamental limitations, this paper initiates the study of intractability results for $\MOT$.
	\par Our main technical contribution is developing a toolkit for proving $\NP$-hardness and inapproximability results for $\MOT$ problems. We demonstrate this toolkit by using it to establish the intractability of a number of $\MOT$ problems studied in the literature that have resisted previous algorithmic efforts. 
	For instance, we provide evidence that repulsive costs make $\MOT$ intractable by showing that several such problems of interest are $\NP$-hard to solve---even approximately.
\end{abstract}

\section{Introduction}\label{sec:intro}

Multimarginal Optimal Transport ($\MOT$) is the problem of linear programming over joint probability distributions with fixed marginal distributions. That is, given $k$ marginal distributions $\mu_1, \dots, \mu_k$ in the simplex\footnote{For notational simplicity, all $\mu_i$ are assumed to have the same support size. The general case is a straightforward extension.} $\Delta_n = \{u \in \R_{\geq 0}^n : \sum_{i=1}^n u_i = 1 \}$ and a cost tensor $C$ in the $k$-fold tensor product space $\Rntk = \R^n \otimes \cdots \otimes \R^n$, compute
\begin{align}
	\min_{P \in \Coup} \langle P, C \rangle
	\label{MOT}
	\tag{MOT}
\end{align}
where $\Coup$ is the ``transportation polytope'' containing entrywise non-negative tensors $P \in (\R^{n})^{\otimes k}$ 
satisfying the marginal constraints $\sum_{j_1,\dots,j_{i-1}, j_{i+1}, \dots, j_{k}} P_{j_1, \dots, j_{i-1}, j, j_{i+1}, \dots, j_k} = [\mu_i]_j$ for all $i \in \{1, \dots,k\}$ and $j \in \{1, \dots, n\}$.

\par This $\MOT$ problem has recently attracted significant interest due to its many applications in data science, applied mathematics, and the natural sciences; see for instance~\citep{AltBoi20mot,BenCarCut15,Pas15,PeyCut17} and the many references within. However, a key issue that dictates the usefulness of $\MOT$ in applications is its complexity. Indeed, while $\MOT$ can be easily solved in $n^{\Theta(k)}$ time since it is a linear program in $n^k$ variables and $n^k + nk$ constraints, this is far from scalable. 

\par In this paper and the literature, we are interested in ``polynomial time'' algorithms, where polynomial means in the number of marginals $k$ and their support sizes $n$ (as well as the scale-invariant quantity $\|C\|_{\max}/\eps$ if we are considering $\eps$ additive approximations). 
An obvious obstacle is that in general, one cannot even read the input to $\MOT$---let alone solve $\MOT$---in $\poly(n,k)$ time since the cost tensor $C$ has $n^k$ entries.

\par Nevertheless, in nearly all applications of practical interest, the cost tensor $C$ has a simple structure that enables it to be input implicitly via a $\poly(n,k)$-sized representation. Moreover, a recent line of work has shown that in many applications where $C$ has such a polynomial-size implicit representation, the $\MOT$ problem can also be solved in polynomial time.
A simple-to-describe illustrative example is cost tensors $C$ which have constant rank and are given as input in factored form~\citep{AltBoi20mot}. 
Other examples include computing generalized Euler flows~\citep{BenCarCut15,AltBoi20mot}, computing low-dimensional Wasserstein barycenters~\citep{CarObeOud15,AltBoi20bary}, solving $\MOT$ problems with tree-structured costs~\citep{h20tree}, and solving $\MOT$ problems with decomposable costs~\citep{AltBoi20mot}.

\par A fundamental question is: What further families of succinctly representable costs lead to tractable $\MOT$ problems? As an illustrative example, can $\MOT$ still be solved in $\poly(n,k)$ time if the cost $C$ has rank that is low but not constant, say of size $\poly(n,k)$? There are a number of $\MOT$ problems studied in the literature for which $C$ has a $\poly(n,k)$-sized representation but developing $\poly(n,k)$-time algorithms has resisted previous efforts. 
The purpose of this paper is to understand the fundamental limitations of this line of algorithmic research.

\subsection{Contributions}\label{ssec:intro:cont}

This paper initiates the study of intractability results for $\MOT$. Our main contributions are:
\begin{enumerate}
	\item In \S\ref{sec:reduction}, we develop a toolkit for proving $\NP$-hardness and inapproximability results for $\MOT$ problems on costs $C$ that have $\poly(n,k)$-size implicit representations. This is the main technical contribution of the paper. 
	\item In \S\ref{sec:applications:lr}, \S\ref{sec:applications:pairwise}, and \S\ref{sec:applications:repulsive}, we demonstrate this toolkit by using it to establish the intractability of a number of $\MOT$ problems studied in the literature that have resisted previous algorithmic efforts.
\end{enumerate}
We elaborate on each point below.

\subsubsection{Reduction toolkit}

Let $\MOT_C(\mu)$ denote the problem of computing the optimal value of $\MOT$ for a cost tensor $C \in \Rntk$ and marginals $\mu = (\mu_1, \dots, \mu_k) \in (\Delta_n)^k$. Informally, our main result establishes that for any fixed cost $C$, the following discrete optimization problem $\MinO_C$ can be (approximately) solved in polynomial time if $\MOT_C$ can be (approximately) solved in polynomial time. 

\begin{defin}\label{def:min}
	For $C \in \Rntk$ and $p = (p_1, \dots, p_k) \in \R^{n \times k}$, the problem $\MinO_C(p)$ is to compute 
	\begin{align}
		\min_{(j_1, \dots, j_k) \in \{1, \dots, n\}^k} C_{j_1, \dots, j_k} - \sum_{i=1}^k [p_i]_{j_i}. 
		\label{eq:def:min}
	\end{align}
\end{defin}

\par The upshot of our result is that it enables us to prove intractability results for $\MOT_C$ by instead proving intractability results for $\MinO_C$. This is helpful since $\MinO_C$ is more directly amenable to $\NP$-hardness and inapproximability reductions because it is phrased as a more conventional combinatorial optimization problem; examples in \S\ref{sec:applications:lr}, \S\ref{sec:applications:pairwise}, and \S\ref{sec:applications:repulsive}.

\par We briefly highlight the primary insight behind the proof: The convex relaxation of $\MinO_C$ is exact and is a convex optimization problem whose objective can be evaluated by solving an auxiliary $\MOT_C$ problem. 
This means that if one can (approximately) solve $\MOT_C$, then one can use this in a black-box manner to (approximately) solve $\MinO_C$ via zero-th order optimization. 
In \S\ref{sec:reduction}, we show how to perform this zero-th order optimization efficiently using the Ellipsoid algorithm if $\MOT_C$ can be computed exactly, and otherwise using recent developments on zero-th order optimization of approximately convex functions~\citep{Ris16,Bel15} if $\MOT_C$ can be computed approximately. 

\par We conclude this discussion with several remarks.

\begin{remark}[Converse]
	There is no loss of generality when using our results to reduce proving the intractability of $\MOT_C$ to proving the intractability of $\MinO_C$. This is because the $\MOT_C$ and $\MinO_C$ problems are \emph{polynomial-time equivalent}---for any cost $C$, and for both exact and approximate solving---because the  converse of this reduction also holds~\citep[\S3]{AltBoi20mot}.
\end{remark}

\begin{remark}[Value vs solution for $\MOT$]\label{rem:val-sol-mot}
	A desirable feature of our hardness results is that they apply regardless of how an $\MOT$ \emph{solution} is computed and (compactly) represented\footnote{
		Indeed, the representations produced by $\MOT$ algorithms often vary: e.g., the solution is polynomially-sparse for the Ellipsoid and Multiplicative Weights algorithms; and is fully dense but has a polynomial-size representation which supports certain efficient operations for the Sinkhorn algorithm. See~\citep{AltBoi20mot} for details.
	}. This is because we show hardness for (approximately) computing the optimal \emph{value} of $\MOT$.
\end{remark}

\begin{remark}[Differences from classical LP theory]\label{rem:min}
	The intuition behind the $\MinO$ problem is that it is the feasibility problem for the dual LP to $\MOT$; see the preliminaries section.
	However, it should be emphasized that our reductions rely on the particular structure of the LP defining $\MOT$, and do \emph{not} hold for a general LP and its dual feasibility oracle~\citep{GLSbook}. Moreover, our approximate reduction is even further from the purview of classical LP theory since it can be used to prove hardness of approximating to polynomially small error rather than exponentially small error. 
\end{remark}

\begin{remark}[$p = 0$]
	The $\MinO_C(0)$ problem is to compute the minimum entry of the tensor $C$. Thus, as a special case of our reductions, it follows that if (approximately) computing the minimum entry of $C$ is $\NP$-hard, then so is (approximately) computing $\MOT_C$. In fact, in our applications in \S\ref{sec:applications:lr}, \S\ref{sec:applications:pairwise}, and \S\ref{sec:applications:repulsive}, we prove intractability of $\MinO_C$---and thus of $\MOT_C$---by showing intractability for this ``simple'' case $p = 0$. However, we mention that in general one cannot restrict only to the case $p = 0$: In \S\ref{sec:necessity}, we give a concrete example where $\MinO_C$ is tractable for $p = 0$ but not general $p$. 
\label{rem:pequalszero}
\end{remark}

\subsubsection{Applications}

\paragraph*{Low-rank costs.} In \S\ref{sec:applications:lr}, we demonstrate this toolbox on $\MOT$ problems with low-rank cost tensors given in factored form. Recent algorithmic work has shown that such $\MOT$ problems can be solved to arbitrary precision $\eps > 0$ in $\poly(n,k,\Cmax/\eps)$ time for any \emph{fixed} rank $r$~\citep{AltBoi20mot}. However, this algorithm's dependence on $r$ is exponential, and it is a natural question whether such $\MOT$ problems can be solved in time that is also polynomial in $r$. We show that unless $\P = \NP$, the answer is no. Moreover, our hardness result extends even to approximate computation. This provides a converse to the aforementioned algorithmic result.

\paragraph*{Pairwise-interaction costs.} In \S\ref{sec:applications:pairwise}, we consider $\MOT$ problems with costs $C$ that decompose into sums of pairwise interactions
\begin{align}
C_{j_1,\dots,j_k} = 
\sum_{1 \leq i < i' \leq k} g_{i,i'}(j_i,j_{i'})
\label{eq:C-pairwise}
\end{align}
for some functions $g_{i,i'} : [n] \times [n] \to \RR$. This cost structure appears in many applications; for instance in Wasserstein barycenters~\citep{anderes2016discrete} and the $\MOT$ relaxation of Density Functional Theory~\citep{CotFriKlu13,ButDepGor12}. Although these costs have $\poly(n,k)$-size implicit representations, we show that this cost structure alone is not sufficient for solving $\MOT$ in polynomial time.

\par One implication of this $\NP$-hardness result is a converse to the algorithmic result of~\citep{AltBoi20mot} which shows that $\MOT$ problems can be solved in $\poly(n,k)$ time for costs $C$ which are decomposable into local interactions of low treewidth. This is a converse because the pairwise-interactions structure~\eqref{eq:C-pairwise} also falls under the framework of $\MOT$ costs that decompose into local interactions, but has high treewidth.

\paragraph*{Repulsive costs.} In \S\ref{sec:applications:repulsive}, we consider $\MOT$ problems with ``repulsive costs''. Informally, these are costs $C_{j_1, \dots, j_k}$ which encourage diversity between the indices $j_1, \dots j_k$; we refer the reader to the nice survey~\citep{di2017optimal} for a detailed discussion of such $\MOT$ problems and their many applications. We provide evidence that repulsive costs lead to intractable $\MOT$ problems by proving that several such $\MOT$ problems of interest are NP-hard to solve. Again, our hardness results extend even to approximate computation.
\par Specifically, in \S\ref{ssec:repulsive:det} we show this for $\MOT$ problems with the determinantal cost studied in~\citep{di2017optimal,CarNaz08}, and in \S\ref{ssec:repulsive:dft} we show this for the popular $\MOT$ formulation of Density Functional Theory~\citep{CotFriKlu13,ButDepGor12,BenCarNen16} with the Coulomb-Buckingham potential. Additionally, in \S\ref{ssec:repulsive:submod}, we observe that the classical problem of evaluating the convex envelope of a discrete function is an instance of \MOT, and we leverage this connection to point out that $\MOT$ is $\NP$-hard to approximate with supermodular costs, yet tractable with submodular costs. This dichotomy provides further evidence for the intractability of repulsive costs, since the intractable former problem has a ``repulsive'' cost, whereas the tractable latter problem has an ``attractive'' cost. 

\par To our knowledge, these are the first results that rigorously demonstrate intractability of $\MOT$ problems with repulsive costs. This provides the first step towards explaining why---despite a rapidly growing literature---there has been a lack of progress in developing polynomial-time algorithms with provable guarantees for many $\MOT$ problems with repulsive costs.

\subsection{Related work}\label{ssec:intro:prev}

\paragraph*{Algorithms for $\MOT$.} The many applications of $\MOT$ throughout data science, mathematics, and the sciences at large have motivated a rapidly growing literature around developing efficient algorithms for $\MOT$. The algorithms in this literature can be roughly divided into two categories. 

\par The first category consists of $\MOT$ algorithms which work for generic ``unstructured'' costs. While these algorithms work for any $\MOT$ problem, they inevitably \emph{cannot} have polynomial runtime in $n$ and $k$ since they read all $n^k$ entries of the cost tensor. A simple such algorithm is to solve $\MOT$ using an out-of-the-box LP solver; this has $n^{\Theta(k)}$ runtime. An alternative popular algorithm is the natural multimarginal generalization of the Sinkhorn scaling algorithm, which similarly has $n^{\Theta(k)}$ runtime but can be faster than out-of-the-box LP solvers in practice, see e.g.,~\citep{BenCarCut15,BenCarDi18,BenCarNen16,BenCarNen19,Nen16,LinHoJor19,Fri20,tupitsa2020multimarginal} among many others. The exponential runtime dependence on $n$ and $k$ prohibits these algorithms from being usable beyond very small values of $n$ and $k$. For instance even $n=k=10$, say, is at the scalability limits of these algorithms.

\par The second category consists of $\MOT$ algorithms that can run in $\poly(n,k)$ time\footnote{Or $\poly(n,k,\Cmax/\eps)$ time for $\eps$-approximate solutions.} for $\MOT$ problems with certain ``structured'' costs that have $\poly(n,k)$-sized implicit representations. This line of work includes for instance the algorithms mentioned earlier in the introduction, namely computing generalized Euler flows~\citep{BenCarCut15,AltBoi20mot}, computing low-dimensional Wasserstein barycenters~\citep{CarObeOud15,AltBoi20bary}, solving $\MOT$ problems with tree-structured costs~\citep{h20tree}, and solving $\MOT$ problems with decomposable costs~\citep{AltBoi20mot}. The purpose of this paper is to understand the fundamental limitations of this line of work.

\paragraph*{Connection to fractional hypergraph matching and complexity of sparse solutions.} Deciding whether $\MOT$ has a solution of sparsity exactly $n$ is well-known to be $\NP$-hard. This $\NP$-hardness holds even in the special case where the number of marginals $k=3$, the cost tensor $C$ has all $\{0,1\}$ entries, and the marginals are uniform $\mu_i = \bone_n/n$. This is because in this special case, $\MOT$ is the natural convex relaxation of the $\NP$-hard $k$-partite matching problem~\citep{karp1972reducibility,garey1979computers}, and in particular $n$-sparse solutions to this $\MOT$ problem are in correspondence with optimal solutions to this $\NP$-hard problem. The $\NP$-hardness of finding the sparsest $\MOT$ solution further extends to ``structured'' $\MOT$ problems whose costs have $\poly(n,k)$-size implicit representations, e.g., the Wasserstein barycenter problem~\citep{borgwardt2019computational}.

\par However, it is important to clarify that finding an $\MOT$ solution which is polynomially sparse (albeit perhaps not the \emph{sparsest}) is \emph{not} necessarily $\NP$-hard. Indeed, sparse such solutions can be computed in polynomial-time for e.g., low-dimensional Wasserstein barycenters~\citep{AltBoi20bary} and $\MOT$ problems with decomposable costs~\citep{AltBoi20mot}. 

\par We emphasize that a desirable property of our hardness results is that they entirely bypass this discussion about whether computing sparse solutions is tractable. This is because our results show that it is $\NP$-hard to even compute the \emph{value} of certain structured $\MOT$ problems (see Remark~\ref{rem:val-sol-mot}), which is clearly a stronger statement than $\NP$-hardness of computing a sparse solution.

\paragraph*{Other related work.} We mention two tangentially related bodies of work in passing. First, the transportation polytope (a.k.a., the constraint set in the $\MOT$ problem) is an object of significant interest in discrete geometry and combinatorics, see e.g.,~\citep{yemelicher1984polytopes,de2014combinatorics} and the references within. Second, linear programming problems over exponentially-sized joint probability distributions appear in various fields such as game theory~\citep{pap08} and variational inference~\citep{wainwright2003variational}. However, it is important to note that the complexity of these linear programming problems is heavily affected by the specific linear constraints, which often differ between problems in different fields.

\section{Preliminaries}\label{sec:prelim}

\paragraph*{Notation.} 
The $k$-fold tensor product space $\R^n \otimes \cdots \otimes \R^n$ is denoted by $\Rntk$, and similarly for $\Rpntk$. The set $\{1, \dots, n\}$ is denoted by $[n]$.  The $i$-th marginal of a tensor $P \in \Rntk$ is the vector $m_i(P) \in \R^n$ with $j$-th entry $ \sum_{j_1,\dots,j_{i-1},j,j_{i+1}, \dots, j_k} P_{j_1,\dots,j_{i-1},j,j_{i+1}, \dots, j_k}$,
for $i \in [k]$ and $j \in [n]$. In this notation, the transportation polytope in~\eqref{MOT} is
	$\Coup = \{ P \in \Rpntk : m_i(P) = \mu_i,\; \forall i \in  [k] \}$.
For shorthand, we often denote an index $(j_1,\dots,j_k)$ by $\jvec$.  For a tensor $C \in \Rntk$, we denote the maximum absolute value of its entries by $C_{\max} = \max_{\jvec} |C_{\jvec}|$. For shorthand, we write $\poly(t_1,\dots,t_m)$ to denote a function that grows at most polynomially fast in those parameters.

\paragraph*{MOT dual.} The dual LP to~\eqref{MOT-D} is
\begin{align}
	& \max_{p_1,\ldots,p_k \in \Rn} \sum_{i=1}^k \langle p_i, \mu_i \rangle \tag{MOT-D}
	\;\;\quad \text{subject to} \;\;\quad  
	C_{\jvec} - \sum_{i=1}^k [p_i]_{j_i} \geq 0, \;\; \forall \jvec \in [n]^k.
	\label{MOT-D} 
\end{align}
Observe that $p = (p_1, \dots, p_k) \in \R^{n \times k}$ is feasible for~\eqref{MOT-D} if and only if the problem $\MinO_C(p)$ has non-negative value. This is the connection between the problem $\MinO_C$ and the feasibility oracle for~\eqref{MOT-D} alluded to in Remark~\ref{rem:min}.

\paragraph*{Bit complexity.} Throughout, we assume for simplicity of exposition that all entries of the cost $C$ and the weights $p \in \RR^{nk}$ inputted in the $\MinO_C$ problem have bit complexity at most $\poly(n,k)$. This implies that the distributions $\mu$ on which the $\MOT_C$ oracle is queried in Theorems~\ref{thm:main:exact} and \ref{thm:main:approx} also have polynomial bit complexity. The general case is a straightforward extension.

\paragraph*{Computational complexity.} $\BPP$ is the class of problems solvable by polynomial-time randomized algorithms with error probability that is $< 1/3$ (or equivalently, any constant less than $1/2$). 
The statement ``$\NP \not\subset \BPP$'' is a standard assumption in computational complexity and is the \textit{randomized} version of $\P \neq \NP$, i.e., that $\NP$-hard problems do not have polynomial-time randomized algorithms.

\section{Reducing $\MinO$ to $\MOT$}\label{sec:reduction}

Here we present the reduction toolkit overviewed in \S\ref{ssec:intro:cont}. Specifically, we show the following two reductions from $\MinO_C$ to $\MOT_C$. The first reduction is used for proving $\NP$-hardness of exactly solving $\MOT_C$, and the second reduction is used for proving inapproximability. These two reductions are incomparable.

\begin{theorem}[Exact reduction]\label{thm:main:exact}
There is a deterministic algorithm that, given access to an oracle solving $\MOT_C$ and weights $p \in \R^{n \times k}$, solves $\MinO_C(p)$ in $\poly(n,k)$ time and oracle queries.
\end{theorem}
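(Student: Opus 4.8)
The plan is to give an \emph{exact} convex reformulation of $\MinO_C(p)$ whose objective value is computed by a single call to the $\MOT_C$ oracle, and then to solve that convex program with the Ellipsoid method. Concretely, the first step is to establish the identity
\[
  \MinO_C(p) \;=\; \min_{\mu = (\mu_1,\dots,\mu_k)\in(\Delta_n)^k}\Bigl(\MOT_C(\mu) - \sum_{i=1}^k\langle p_i,\mu_i\rangle\Bigr).
\]
For ``$\leq$'', evaluate the right-hand side at the vertices of $(\Delta_n)^k$, i.e.\ the tuples of point masses $\mu_i = e_{j_i}$: for such $\mu$ the only tensor in $\cM(\mu_1,\dots,\mu_k)$ is the one that equals $1$ at entry $\jvec = (j_1,\dots,j_k)$ and $0$ elsewhere, so $\MOT_C(\mu) = C_{\jvec}$ and the bracketed quantity equals $C_{\jvec} - \sum_i[p_i]_{j_i}$. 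For ``$\geq$'', note that for any $\mu$ and any $P\in\cM(\mu_1,\dots,\mu_k)$ one has $\langle P,C\rangle - \sum_i\langle p_i,\mu_i\rangle = \sum_{\jvec}P_{\jvec}\bigl(C_{\jvec}-\sum_i[p_i]_{j_i}\bigr)\geq \MinO_C(p)$ because $P$ is a probability tensor; minimizing over $P$ and then over $\mu$ gives the bound. (This is exactly the weak-duality relation between $\MinO_C$ and \eqref{MOT-D}, which is why the relaxation is exact.)

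Second, I will record that $g(\mu) := \MOT_C(\mu) - \sum_i\langle p_i,\mu_i\rangle$ is convex in $\mu$: it is a linear function subtracted from $\mu\mapsto\MOT_C(\mu)$, which is convex because convex-combining optimal couplings for two marginal tuples yields a feasible coupling for the combined tuple (the standard fact that the optimal value of an LP is convex in its right-hand side). Moreover $g$ can be evaluated at any $\mu\in(\Delta_n)^k$ using one query to the $\MOT_C$ oracle plus $O(nk)$ arithmetic. Finally, by LP duality $g$ is piecewise linear with slopes among the vertices of $\{q\in\R^{n\times k} : \MinO_C(q)\geq 0\}$, which under the standing bit-complexity assumptions have bit complexity $\poly(n,k)$; hence $g$ is $L$-Lipschitz with $\log L = \poly(n,k)$.

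Third, I will run the Ellipsoid method to minimize the convex function $g$ over the explicitly described bounded polytope $(\Delta_n)^k$. A first-order (approximate-subgradient) oracle is obtained from the value oracle by finite-differencing $g$ along the affine hull of $(\Delta_n)^k$ with a $\poly(n,k)$-bit step size; since $g$ is convex and poly-Lipschitz these are legitimate $\eps'$-subgradients, so the standard Ellipsoid analysis returns an estimate of $\min g$ to accuracy $\eps$ in $\poly(n,k,\log\tfrac1\eps)$ time and oracle calls. Since the exact value $\MinO_C(p) = \min_{\jvec}\bigl(C_{\jvec}-\sum_i[p_i]_{j_i}\bigr)$ is a rational of bit complexity $\poly(n,k)$, running with $\eps$ exponentially small (so $\log\tfrac1\eps$ is still $\poly(n,k)$) and rounding to the nearest such rational recovers it exactly and deterministically.

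The main obstacle I expect is the technical bookkeeping in the second and third steps: justifying that a \emph{value-only} $\MOT_C$ oracle suffices (so that finite-difference subgradients of the non-smooth convex $g$ are valid inputs to Ellipsoid), running Ellipsoid correctly over the lower-dimensional polytope $(\Delta_n)^k$ (choice of initial ellipsoid, working in the affine hull, and the stopping criterion), and verifying that only $\poly(n,k)$ bits of precision are needed for exact recovery. By contrast, the reformulation and the convexity claim are short once the point-mass evaluation and the coupling-averaging observation are in hand.
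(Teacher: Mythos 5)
Your first two steps are correct, and in fact they reach the paper's key identity by a more elementary route: your point-mass evaluation (for ``$\leq$'') plus the probability-tensor averaging bound (for ``$\geq$'') is exactly the content of the paper's Lemma~\ref{lem:mot-cvx} and Corollary~\ref{cor:mot-cvx}, which the paper instead derives by computing the Fenchel bi-conjugate of $f$. Your observations that $g(\mu)=\MOT_C(\mu)-\sum_i\langle p_i,\mu_i\rangle$ is convex, piecewise linear, with pieces of $\poly(n,k)$ encoding length coming from the dual polyhedron, also match what the paper's proof records.

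The genuine gap is in your third step, and it is exactly the point you flag but do not resolve: coordinate finite differences of a \emph{non-smooth} piecewise-linear convex function are \emph{not} valid $\eps'$-subgradients, no matter how small the step size. For example, for $g(x,y)=\max(x,y)$ at the origin the forward differences give $s=(1,1)$, and along the diagonal $y=(t,t)$ the subgradient inequality $g(y)\geq g(x)+\langle s,y-x\rangle-\eps'$ is violated by an amount of order $t$ (the domain diameter), independently of the step $h$; the analogous situation occurs for your $g$ at kink points such as highly symmetric marginals, which a deterministic Ellipsoid run cannot be assumed to avoid, and a cut generated from such a non-subgradient can discard the optimum, so ``the standard Ellipsoid analysis'' does not apply as stated. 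The paper's proof handles precisely this issue differently: having established that $F(\mu)=\MOT_C(\mu)-\langle\mu,p\rangle$ is a maximum of finitely many linear functions of polynomial encoding length (via \eqref{MOT-D}, Minkowski--Weyl, and Cramer's rule), it invokes the Ellipsoid result of \citep[Theorem 6.5.19]{GLSbook}, which minimizes such a function over a well-described polyhedron \emph{exactly}, using only polynomially many value-oracle evaluations --- thereby also removing the need for your exponentially-small-accuracy-plus-rounding argument. To repair your write-up you would need either to invoke that theorem (or an equivalent construction turning a value oracle into a valid separation/subgradient oracle for piecewise-linear convex functions), since naive differencing of a value oracle does not provide one.
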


\begin{theorem}[Approximate reduction]\label{thm:main:approx}
	There is a randomized algorithm that, given $\eps > 0$, access to an oracle solving $\MOT_C$ to additive accuracy $\eps$, and weights $p  \in \R^{n \times k}$, solves $\MinO_C(p)$ up to $\eps \cdot  \poly(n,k)$ additive accuracy with probability $2/3$ in $\poly(n,k,\tfrac{\Cmax}{\eps})$ time and oracle queries. 
\end{theorem}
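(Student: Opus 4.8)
The plan is to exhibit $\MinO_C(p)$ as the minimum of a convex function over the product of simplices $(\Delta_n)^k$, to observe that this function can be evaluated, up to the oracle's additive error $\eps$, by a single call to the $\MOT_C$ oracle, and then to minimize it using zeroth-order optimization that tolerates additive noise. Writing $\mu=(\mu_1,\dots,\mu_k)$ and $C'_{\jvec} := C_{\jvec} - \sum_{i=1}^k [p_i]_{j_i}$, define $g : (\Delta_n)^k \to \R$ by $g(\mu) := \min_{P \in \cM(\mu_1,\dots,\mu_k)} \langle P, C' \rangle$. Since $\langle P, C'\rangle = \langle P, C\rangle - \sum_i \langle p_i, m_i(P)\rangle$, for every $P \in \cM(\mu)$ this equals $\langle P, C\rangle - \sum_i \langle p_i, \mu_i\rangle$, so $g(\mu) = \MOT_C(\mu) - \sum_i\langle p_i,\mu_i\rangle$. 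Hence a value $g(\mu)$ is computed by one $\MOT_C$ query plus $O(nk)$ arithmetic operations at $\poly(n,k)$ bit complexity, and an $\eps$-accurate oracle for $\MOT_C$ yields an $\eps$-accurate value oracle for $g$.

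Two structural facts make this work. First, $\min_{\mu \in (\Delta_n)^k} g(\mu) = \MinO_C(p)$: for every $\mu$ and every $P \in \cM(\mu)$ we have $\langle P, C'\rangle \geq \min_{\jvec} C'_{\jvec}$ because $P$ is a probability distribution, while choosing $\mu_i$ to be the point mass at the $i$-th coordinate of a minimizing index $\jvec^\star$ forces the only element of $\cM(\mu)$ to be the point mass at $\jvec^\star$, so $g(\mu) = C'_{\jvec^\star} = \MinO_C(p)$. Second, $g$ is convex: it is the partial minimization over $P$ of the jointly convex function that equals $\langle P, C'\rangle$ on the convex set $\{(P,\mu) : P \geq 0,\ m_i(P) = \mu_i\ \forall i\}$ and $+\infty$ elsewhere, and partial minimization preserves convexity. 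Moreover $g$ is finite on $(\Delta_n)^k$ (product distributions lie in $\cM(\mu)$) and $O(C'_{\max})$-Lipschitz there in $\ell_1$. Finally, a preprocessing step lets us assume $\|p_i\|_\infty \leq 2\Cmax$, hence $C'_{\max} = O(k\Cmax)$: subtract $(\max_j [p_i]_j)\bone$ from each $p_i$ (changing $\MinO_C(p)$ by an explicitly known constant), then replace every entry of $p_i$ below $-2\Cmax$ by $-2\Cmax$; this leaves $\MinO_C(p)$ unchanged, since after the shift $-\Cmax \leq \MinO_C(p) \leq \Cmax$ whereas any index using a clipped entry has objective value $\geq \Cmax$.

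It then remains to minimize the $\eps$-approximately-convex function $g$ over $(\Delta_n)^k$, which after an affine identification is a full-dimensional body of dimension $k(n-1)$ and aspect ratio $\poly(n,k)$, over which $g$ has range $O(k\Cmax)$. Applying the zeroth-order optimization algorithms for approximately convex functions of~\citep{Ris16,Bel15}, we obtain, using $\poly(n,k,\Cmax/\eps)$ evaluations of the $\eps$-accurate value oracle for $g$ and $\poly(n,k,\Cmax/\eps)$ additional time, a point $\hat\mu \in (\Delta_n)^k$ with $g(\hat\mu) \leq \min g + \eps\cdot\poly(n,k)$ with at least constant probability; running this $O(1)$ times and returning the iterate of smallest estimated $g$-value boosts the success probability to $2/3$ (valid since we can compare $g$-values up to additive $\eps$). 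Outputting the estimated value of $g(\hat\mu)$ plus the known normalization constant then solves $\MinO_C(p)$ to additive accuracy $\eps\cdot\poly(n,k)$, since $\min g$ equals $\MinO_C(p)$ up to that constant. The only remaining bookkeeping is to keep every queried $\mu$ at $\poly(n,k)$ bit complexity: since the algorithm queries rational points, it suffices to round queries onto a fine grid of $\poly(n,k)$-bit rationals inside $(\Delta_n)^k$, which costs an extra $\eps\cdot\poly(n,k)$ error by the Lipschitzness of $g$.

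I expect the main obstacle to be this last step: invoking the approximately-convex optimization results of~\citep{Ris16,Bel15} with the correct quantitative dependence---namely (i) presenting $(\Delta_n)^k$ as a well-rounded full-dimensional body so those results apply, (ii) tracking how the achieved error $\eps\cdot\poly(n,k)$ and the query/time complexity scale with the dimension $k(n-1)$, the range $O(k\Cmax)$ of $g$, and the target accuracy, and (iii) controlling the bit complexity of oracle queries. The other ingredients---convexity of $g$, the identity $\min_{(\Delta_n)^k} g = \MinO_C(p)$, the reduction of a $g$-evaluation to one $\MOT_C$ query, and the normalization of $p$---are routine.
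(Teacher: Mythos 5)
Your proposal is correct and follows essentially the same route as the paper: rewrite $\MinO_C(p)$ as the minimum over $(\Delta_n)^k$ of the convex function $\mu \mapsto \MOT_C(\mu) - \langle \mu, p\rangle$, evaluate this function to additive accuracy $\eps$ via one call to the approximate $\MOT_C$ oracle, and then invoke the zeroth-order optimization results for approximately convex functions of~\citep{Ris16,Bel15}, using the $O(\Cmax)$-Lipschitzness of $\MOT_C$ in the marginals. The only differences are presentational: the paper obtains the convexity and the exactness of the relaxation by identifying this function with the Fenchel bi-conjugate (convex envelope) of the discrete objective, whereas you verify both facts directly (partial minimization for convexity, point masses and averaging for exactness), and you additionally spell out the normalization of $p$ and the bit-complexity bookkeeping that the paper handles by assumption.
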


We make two remarks in passing about these theorems. First, they hold unchanged if the $\MinO_C$ problem is modified to require computing the minimizing tuple rather than the minimum value in~\eqref{eq:def:min}. This is because these two problems are polynomial-time equivalent~\citep[Appendix A.1]{AltBoi20mot}. Second, the inapproximability reduction is probabilistic\footnote{It is an interesting question whether the approximate reduction in Theorem~\ref{thm:main:approx} can be de-randomized. This would enable showing our inapproximability results under the assumption $\P \neq \NP$ rather than $\NP \not\subset \BPP$.}, and thus shows inapproximability under the standard complexity assumption $\NP \not\subset \BPP$, which is informally the stronger ``randomized version'' of $\P \neq \NP$.

\subsection{Proof overview}\label{ssec:reduction:overview}

As described briefly in \S\ref{ssec:intro:cont}, the main idea is to reduce $\MinO_C$ to a convex optimization problem for which the objective function can be evaluated by solving an auxiliary $\MOT_C$ problem. Formally, embed $[n]^k$ into $\R^{nk}$ via 
\[
\phi((j_1,\dots,j_k)) := (e_{j_1},\dots,e_{j_k})^T,
\]
where $e_{j}$ denotes the vector in $\R^n$ with a $1$ on entry $j$, and $0$'s on all other entries. Then $\MinO_C$ is equivalent to minimizing $f(x) := C_{\phi^{-1}(x)} - \langle p, x \rangle$ over the discrete set 
\[
\cS := \phi([n]^k) = \{x = (x_1, \dots, x_k)^T \in \{0,1\}^{nk} \, : \, \|x_1\|_1 = \dots = \|x_k\|_1 = 1 \},
\]
where here we abuse notation slightly by viewing $p = (p_1, \dots, p_k) \in \R^{n \times k}$ as the vector in $\R^{nk}$ formed by concatenating its columns $p_1, \dots, p_k$. Let $F : \R^{nk} \to \R$ denote the convex envelope of $f$, i.e., the pointwise largest convex function over $\R^{nk}$ that is pointwise below $f$ on the domain $\cS$ of $f$. By explicitly computing $F$ as the Fenchel bi-conjugate of $f$, we obtain the following two useful characterizations of $F$ (proved in \S\ref{ssec:reduction:pf}). Below, let $\Delta_{\cS}$ denote the set of probability distributions over $\cS$.

\begin{lemma}[$\MOT_C$ is convex envelope of $\MinO_C$]\label{lem:mot-cvx}
	For all $\mu = (\mu_1,\dots,\mu_k) \in (\Delta_n)^k$, 
	\begin{align}
		F(\mu)
		&= \min_{D \in \Delta_\cS \text{ s.t. } \E_{x \sim D} x = \mu} \E_{x \sim D} f(x)
		\label{eq:mot-cvx:1}
		\\ &= 
		-\langle \mu,p \rangle + 
		\mathsf{MOT}_C(\mu).
		\label{eq:mot-cvx:2}
	\end{align}
\end{lemma}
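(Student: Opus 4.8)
The plan is to prove the two identities \eqref{eq:mot-cvx:1} and \eqref{eq:mot-cvx:2} in turn. First I would show that the convex envelope $F$ agrees on $(\Delta_n)^k$ with the ``convex hull of the epigraph'' formula $g(\mu) := \min\{\E_{x\sim D}f(x) : D \in \Delta_\cS,\ \E_{x\sim D}x = \mu\}$, which is the standard description of the convex envelope of a function on a finite set. Then I would obtain \eqref{eq:mot-cvx:2} from \eqref{eq:mot-cvx:1} simply by unwinding the definition of $f$ and recognizing that the remaining minimization over distributions is exactly the $\MOT_C$ linear program. Two setup remarks keep this well-posed. Extend $f$ to all of $\R^{nk}$ by $f \equiv +\infty$ off $\cS$; then $F$, being the pointwise supremum of the nonempty family of convex functions dominated by $f$ on $\cS$, is itself convex, is finite on $\conv(\cS)$, and satisfies $F \leq f$ on $\cS$. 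Moreover $(\Delta_n)^k \subseteq \conv(\cS)$: for $\mu = (\mu_1,\dots,\mu_k)$ the product tensor $\mu_1 \otimes \cdots \otimes \mu_k$ pushes forward under $\phi$ to a distribution $D \in \Delta_\cS$ with $\E_{x\sim D}x = \mu$, so the set over which $g(\mu)$ minimizes is nonempty (and compact, with linear objective), hence $g(\mu)$ is attained and finite.

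First, for \eqref{eq:mot-cvx:1}: the map $g$ is convex, because given $\mu,\mu'$ with optimal distributions $D,D'$ and $t \in [0,1]$, the mixture $tD + (1-t)D'$ is feasible for $g(t\mu + (1-t)\mu')$ and has objective value $t\,g(\mu) + (1-t)\,g(\mu')$; and $g \leq f$ on $\cS$ by taking point masses. Since $F$ is the largest convex function dominated by $f$ on $\cS$, this gives $g \leq F$ on $\conv(\cS)$. Conversely, for any feasible $D$ in the definition of $g(\mu)$, Jensen's inequality together with $F \leq f$ on $\cS$ gives $F(\mu) = F(\E_{x\sim D}x) \leq \E_{x\sim D}F(x) \leq \E_{x\sim D}f(x)$; minimizing the right-hand side over $D$ yields $F(\mu) \leq g(\mu)$. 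Hence $F = g$ on $(\Delta_n)^k$.

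Then, for \eqref{eq:mot-cvx:2}: writing $f(x) = C_{\phi^{-1}(x)} - \langle p, x\rangle$ and using the constraint $\E_{x\sim D}x = \mu$ to pull the linear term out of the expectation,
\[
g(\mu) \;=\; -\langle \mu, p\rangle \;+\; \min\big\{ \E_{x\sim D}\, C_{\phi^{-1}(x)} \;:\; D \in \Delta_\cS,\; \E_{x\sim D}x = \mu \big\}.
\]
Now identify a distribution $D \in \Delta_\cS$ with the probability tensor $P \in \Rpntk$ defined by $P_{\jvec} = D(\{\phi(\jvec)\})$. Under this bijection, $\E_{x\sim D}x_i = \sum_{\jvec}P_{\jvec}e_{j_i} = m_i(P)$, so the constraint $\E_{x\sim D}x = \mu$ becomes $m_i(P) = \mu_i$ for all $i$, i.e.\ $P \in \Coup$; and $\E_{x\sim D}C_{\phi^{-1}(x)} = \sum_{\jvec}P_{\jvec}C_{\jvec} = \langle P, C\rangle$. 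Therefore the inner minimum equals $\min_{P \in \Coup}\langle P, C\rangle = \MOT_C(\mu)$, which gives \eqref{eq:mot-cvx:2}.

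I do not expect a substantive obstacle: the only points requiring care are the bookkeeping in the convex-envelope step — checking that $F$ is convex, proper near $(\Delta_n)^k$, and dominated by $f$ on $\cS$ so that Jensen applies, together with the observation $(\Delta_n)^k \subseteq \conv(\cS)$ that keeps $g$ finite — and the translation dictionary between distributions on $\cS$ and transport plans in $\Coup$. An equivalent route, the one the text alludes to, is to compute $F = f^{**}$ directly: one gets $f^*(\theta) = \max_{\jvec}\big(\sum_i[\theta_i + p_i]_{j_i} - C_{\jvec}\big)$, and then, after the substitution $q_i = \theta_i + p_i$ and eliminating the maximum with an epigraph variable (absorbed into $q_1$ using $\langle \mathbf{1}, \mu_1\rangle = 1$), $f^{**}(\mu) = -\langle \mu, p\rangle + \sup\{\sum_i\langle p_i', \mu_i\rangle : \sum_i [p_i']_{j_i} \leq C_{\jvec}\;\forall \jvec\}$, which equals $-\langle \mu, p\rangle + \MOT_C(\mu)$ by LP strong duality applied to \eqref{MOT-D}; on this route the extra care-points are invoking strong duality and justifying $F = f^{**}$ (which holds since $\cS$ is closed and $f$ is lower semicontinuous on $\R^{nk}$, so its biconjugate is its convex envelope).
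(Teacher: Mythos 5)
Your proof is correct, but it reaches \eqref{eq:mot-cvx:1} by a genuinely different route than the paper. The paper computes $F$ as the Fenchel bi-conjugate $f^{**}$: it writes out $f^*$, relaxes the inner minimization over $\cS$ to distributions $D \in \Delta_\cS$, and then invokes LP strong duality to swap the max over $y$ with the min over $D$, after which the max over $y$ enforces the barycenter constraint $\E_{x\sim D}x=\mu$. You instead verify the Choquet characterization directly and primally: $g(\mu)=\min\{\E_{x\sim D}f(x): D\in\Delta_\cS,\ \E_{x\sim D}x=\mu\}$ is convex and dominated by $f$ on $\cS$ (point masses), so $g\leq F$ by maximality of the envelope, while Jensen's inequality applied to the convex function $F\leq f$ gives $F\leq g$; this avoids conjugacy and strong duality altogether and only needs finiteness of $F$ on $\conv(\cS)$ and the (correct) observation $(\Delta_n)^k\subseteq\conv(\cS)$ via the product coupling. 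Both arguments then obtain \eqref{eq:mot-cvx:2} identically, by pulling out the linear term $-\langle\mu,p\rangle$ and using the dictionary $D\leftrightarrow P$ between $\Delta_\cS$ with barycenter $\mu$ and transport plans $P\in\Coup$, under which $\E_{x\sim D}C_{\phi^{-1}(x)}=\langle P,C\rangle$. Your elementary route is more self-contained; the paper's biconjugation route (which you also sketch correctly as the alternative, where $F=f^{**}$ is justified because the envelope of a finitely supported function is polyhedral, hence closed) has the side benefit of surfacing the dual formulation \eqref{MOT-D} explicitly, which matches the paper's broader narrative connecting $\MinO_C$ to dual feasibility. Either argument is acceptable as a proof of the lemma.
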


The first representation~\eqref{eq:mot-cvx:1} of $F$ gives a Choquet integral representation of $F$ in terms of $f$. Importantly, it implies that in order to (approximately) minimize $f$ over its discrete domain $\cS$---i.e., solve the (approximate) $\MinO_C$ oracle---it suffices to (approximately) minimize $F$ over its continuous domain, namely the convex hull $\conv(\cS) = (\Delta_n)^k$. See Corollary~\ref{cor:mot-cvx}.
\par Now to (approximately) minimize $F$, we appeal to algorithmic results from zero-th order convex optimization since the second representation~\eqref{eq:mot-cvx:2} of $F$ shows that (approximately) evaluating $F$ amounts to (approximately) solving an auxiliary $\MOT_C$ problem. Specifically, we show how to implement the zero-th order optimization using the Ellipsoid algorithm in the case of exact oracle evaluations, and otherwise using the recent results~\citep{Ris16,Bel15} on zero-th order optimization of approximately convex functions in the case of approximate oracle evaluations.

\begin{remark}[Connection to submodular optimization]
	This proof is inspired by the classical idea of minimizing a submodular function by minimizing its Lov\'asz extension~\citep{GroLovSch81}, which is a special case of Theorem~\ref{thm:main:exact} for $n=2$ and submodular costs $C$. In fact, in light of the equivalence between $\MOT_C$ and the Lov\'asz extension in that special case (described in \S\ref{ssec:repulsive:submod}), this is arguably the appropriate generalization thereof to optimizing general discrete functions over general ground sets of size $n \geq 2$.
\end{remark}

\subsection{Proofs}\label{ssec:reduction:pf}

\begin{proof}[Proof of Lemma~\ref{lem:mot-cvx}]
	The convex envelope $F$ of $f$ is equal to the Fenchel bi-conjugate $f^{**}$ of $f$~\citep{Rocbook}. The Fenchel conjugate of $f$ is $f^* : \R^{nk} \to \R$ where $f^*(y) = \max_{x \in \cS} \langle x,y \rangle - f(x) = \max_{x \in \cS} \langle x,y+p\rangle - C_{\phi^{-1}(x)}$. Thus the Fenchel bi-conjugate $f^{**}$ is
	\begin{align*}
		F(\mu)
		=
		f^{**}(\mu)
		=
		\max_{y \in \R^{nk}} \langle y, \mu \rangle - f^*(y)
		= 
		\max_{y \in \R^{nk}} \min_{x \in \cS} \langle y, \mu - x \rangle - \langle x, p \rangle + C_{\phi^{-1}(x)}.
	\end{align*}
	By performing a convex relaxation over the inner minimization (to distributions $D$ over the set $\cS$) and then invoking LP strong duality, we obtain
	\begin{align*}
		F(\mu)
		=
		\min_{D \in \Delta_{\cS}} \max_{y \in \R^{nk}} \langle y, \mu - \E_{x \sim D} x \rangle - \langle \E_{x \sim D} x, p \rangle + \E_{x \sim D} C_{\phi^{-1}(x)} 
	\end{align*}
	Note that the inner maximization over $y$ has unbounded cost $+\infty$ unless $\E_{x \sim D} x = \mu$. Thus
	\begin{align*}
		F(\mu)
		=
		- \langle \mu,p \rangle
		+
		\min_{D \in \Delta_{\cS} \; \text{s.t.} \; \E_{x \sim D} x = \mu} \E_{x \sim D} C_{\phi^{-1}(x)},
	\end{align*}
	This proves~\eqref{eq:mot-cvx:1} by definition of $f$. Now~\eqref{eq:mot-cvx:2} follows since distributions $D$ over $\cS$ with expectation $\mu$ are in correspondence with joint distributions $P \in \Coup$, and under this correspondence $\E_{x \sim D} C_{\phi^{-1}(x)}$ simply amounts to $\langle P, C \rangle$.	
\end{proof}

\begin{cor}[Minimizing $F$ suffices for minimizing $f$]\label{cor:mot-cvx}
	The minimum value of $F$ over $(\Delta_n)^k$ is equal to the minimum value of $f$ over $S$. 
\end{cor}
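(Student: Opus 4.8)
The plan is to derive Corollary~\ref{cor:mot-cvx} directly from the characterization of $F$ as the convex envelope of $f$, using standard facts about convex envelopes combined with representation~\eqref{eq:mot-cvx:1} from Lemma~\ref{lem:mot-cvx}. Recall that $F$ is by definition the pointwise largest convex function on $\R^{nk}$ that lies below $f$ on $\cS$; in particular $F \leq f$ on $\cS$, and $F$ is a convex function whose effective domain is $\conv(\cS) = (\Delta_n)^k$. So the two quantities to be compared are $\min_{x \in \cS} f(x)$ and $\min_{\mu \in (\Delta_n)^k} F(\mu)$.

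First I would show $\min_{\mu} F(\mu) \leq \min_{x \in \cS} f(x)$. This is immediate: for any $x \in \cS \subseteq (\Delta_n)^k$ we have $F(x) \leq f(x)$, so taking the minimum over $x \in \cS$ on both sides and noting $\min_{\mu \in (\Delta_n)^k} F(\mu) \leq \min_{x \in \cS} F(x)$ gives the inequality. For the reverse inequality $\min_{\mu} F(\mu) \geq \min_{x \in \cS} f(x)$, I would invoke~\eqref{eq:mot-cvx:1}: for every $\mu \in (\Delta_n)^k$, $F(\mu) = \min_{D \in \Delta_\cS,\ \E_{x \sim D} x = \mu} \E_{x \sim D} f(x)$. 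Since $D$ is supported on $\cS$, the expectation $\E_{x \sim D} f(x)$ is an average of values $f(x)$ with $x \in \cS$, hence is at least $\min_{x \in \cS} f(x)$; this holds for every feasible $D$ and every $\mu$, so $F(\mu) \geq \min_{x \in \cS} f(x)$ for all $\mu \in (\Delta_n)^k$, and taking the minimum over $\mu$ finishes. (One should note the minimum over $D$ in~\eqref{eq:mot-cvx:1} is attained since $\cS$ is finite and the feasible set of distributions is a nonempty compact polytope whenever $\mu \in \conv(\cS)$, so there is no subtlety about inf versus min.)

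There is essentially no hard part here — the corollary is a one-line consequence of Lemma~\ref{lem:mot-cvx} once one unwinds the definition of the convex envelope. The only thing to be slightly careful about is the domain: $F$ is $+\infty$ outside $\conv(\cS) = (\Delta_n)^k$, so it is legitimate to restrict the minimization of $F$ to $(\Delta_n)^k$, and conversely $\cS \subseteq (\Delta_n)^k$ ensures the first inequality goes through. I would write the proof in two short displayed chains of inequalities, one for each direction, citing~\eqref{eq:mot-cvx:1} for the nontrivial direction and the defining property $F|_{\cS} \leq f|_{\cS}$ of the convex envelope for the trivial one.
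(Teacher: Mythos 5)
Your proposal is correct and follows essentially the same route as the paper: both arguments rest on the Choquet representation~\eqref{eq:mot-cvx:1} from Lemma~\ref{lem:mot-cvx}, with your two-inequality write-up simply making explicit what the paper compresses into the one-line observation that the minimizers of $F$ over $(\Delta_n)^k$ form the convex hull of the minimizers of $f$ over $\cS$. No gaps; your attention to the attainment of the minimum over $D$ and to the domain $\conv(\cS) = (\Delta_n)^k$ is sound, if slightly more detail than the paper records.
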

\begin{proof}
	By the Choquet representation~\eqref{eq:mot-cvx:1} of $F$ in Lemma~\ref{lem:mot-cvx}, the set of minimizers of $F$ over $(\Delta_n)^k$ is equal to the convex hull of the minimizers of $f$ over $S$.
\end{proof}

\subsubsection{Hardness of computation} 

\begin{proof}[Proof of Theorem~\ref{thm:main:exact}]
	By Corollary~\ref{cor:mot-cvx}, it suffices to minimize $F$ over $(\Delta_n)^k$ in the desired runtime. 
	\par To this end, we claim that $F$ is the maximum of a finite number of linear functions, each of which has polynomial encoding length in the sense of~\citep[\S6.5]{GLSbook}. To show this statement, it suffices to show the same statement for the function $\mu \mapsto \MOT_C(\mu)$ by the representation~\eqref{eq:mot-cvx:2} of $F$ in Lemma~\ref{lem:mot-cvx} that equates $F$ to a linear function plus $\MOT_C$. This latter statement follows by the dual MOT formulation~\eqref{MOT-D} and a standard LP argument. Specifically, the function $\mu \mapsto \MOT_C(\mu)$ is a linear function in $\mu$ with finitely many pieces, one for each vertex of the polyhedral feasible set defining~\eqref{MOT-D} by the Minkowski-Weyl Theorem; and furthermore, the vertices are solutions to linear systems in the constraints, and thus have polynomial bit complexity by Cramer's Theorem.
	\par Therefore, since $(\Delta_n)^k$ is a ``well-described polyhedron'' in the sense of~\citep[Definition 6.2.2]{GLSbook}, we may apply the Ellipsoid algorithm in~\citep[Theorem 6.5.19]{GLSbook}. That theorem shows that $F$ can be minimized over $(\Delta_n)^k$ using polynomially many evaluations of $F$ and polynomial additional processing time. By appealing again to the representation~\eqref{eq:mot-cvx:2} of $F$ in Lemma~\ref{lem:mot-cvx}, each evaluation of $F$ can be performed via a single $\MOT_C$ computation and polynomial additional processing time.
\end{proof}

\subsubsection{Hardness of approximation}

\begin{proof}[Proof of Theorem~\ref{thm:main:approx}]
	By Corollary~\ref{cor:mot-cvx}, it suffices to compute the minimum value of $F$ over $(\Delta_n)^k$ to additive accuracy $\Theta(\eps nk)$. By the representation of $F$ in~\eqref{eq:mot-cvx:2}, this amounts to approximately computing
	\begin{align}
		\min_{\mu \in (\Delta_n)^k} 
		\MOT_C(\mu)
		- \langle \mu, p \rangle.
		\label{eq:thm-amin-to-amot}
	\end{align} 
to that accuracy.
Note that a query to the oracle computing $\MOT_C$ to $\eps$ accuracy (plus polynomial-time additional computation) computes the objective function in~\eqref{eq:thm-amin-to-amot} to $\eps$ additive accuracy. Therefore, this is an instance of the zero-th order optimization problem for approximately convex functions studied in~\citep{Bel15,Ris16}. The claimed runtime and approximation accuracy follow from their results once we check that $F$ has polynomial Lipschitz parameter, done next (we give a tighter bound than needed since it may be of independent interest).
\end{proof}

\begin{lemma}[$\ell_1$-Lipschitzness of $\MOT_C$ w.r.t. marginals]\label{lem:mot-lip}
	The function $\mu \mapsto \MOT_C(\mu)$ on $(\Delta_n)^k$ is Lipschitz with respect to the entrywise $\ell_1$ norm with parameter $2\Cmax$. 
\end{lemma}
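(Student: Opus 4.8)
The plan is to prove the slightly stronger statement that $\mu\mapsto\MOT_C(\mu)$ is $\Cmax$-Lipschitz in the entrywise $\ell_1$ norm on $(\Delta_n)^k$ (which trivially implies the claimed bound $2\Cmax$), by a ``cut-and-paste'' argument directly on couplings rather than via the dual. First I would reduce to the case where $\mu$ and $\mu'$ differ in a single coordinate: setting $\mu^{(0)}=\mu$ and $\mu^{(i)}=(\mu_1',\dots,\mu_i',\mu_{i+1},\dots,\mu_k)$ for $i\in[k]$, every consecutive pair $\mu^{(i-1)},\mu^{(i)}$ lies in $(\Delta_n)^k$ and differs only in coordinate $i$, so a triangle-inequality telescope over $i=1,\dots,k$ reduces the lemma to the bound $|\MOT_C(\mu^{(i-1)})-\MOT_C(\mu^{(i)})|\le\Cmax\norm{\mu_i-\mu_i'}_1$ (summing the right-hand sides over $i$ gives exactly $\Cmax$ times the entrywise $\ell_1$ distance between $\mu$ and $\mu'$).

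Now fix such a single-coordinate pair, write $i$ for the coordinate and $\delta=\norm{\mu_i-\mu_i'}_1$, and decompose $\mu_i-\mu_i'=a-b$ into its positive and negative parts; since $\mu_i$ and $\mu_i'$ are both probability vectors, $a,b\ge 0$ and $\norm{a}_1=\norm{b}_1=\delta/2$. Let $P$ be an optimal coupling for $\mu^{(i-1)}$. Slicing along coordinate $i$, I would split $P=Q+R$ where, on the slice indexed by $j$ in coordinate $i$, the tensor $Q$ takes the fraction $\min([\mu_i]_j,[\mu_i']_j)/[\mu_i]_j$ of $P$'s $j$-th slice and $R$ takes the complementary fraction (when $[\mu_i]_j=0$ both slices are zero, which is also the only case in which the ratio is undefined). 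Then $Q,R\ge 0$, the coordinate-$i$ marginal of $Q$ is the entrywise minimum $\mu_i\wedge\mu_i'$, and that of $R$ is $a$; in particular $\norm{R}_1=\norm{a}_1=\delta/2$. I would then replace $R$ by \emph{any} non-negative tensor $R'$ whose coordinate-$i$ marginal is $b$ and whose every other marginal equals the corresponding marginal of $R$. Such an $R'$ exists because all of the prescribed marginals are non-negative vectors of the common total mass $\delta/2$; concretely one may take the rescaled product $R'_{\jvec}=b_{j_i}\prod_{i'\neq i}[m_{i'}(R)]_{j_{i'}}\big/(\delta/2)^{k-1}$ (and $R'=0$ if $\delta=0$), which has $\norm{R'}_1=\delta/2$. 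Setting $P':=Q+R'$, a direct marginal computation shows $m_i(P')=\mu_i\wedge\mu_i'+b=\mu_i'$ and $m_{i'}(P')=m_{i'}(Q)+m_{i'}(R)=m_{i'}(P)=\mu_{i'}$ for $i'\neq i$, so $P'$ is feasible for $\mu^{(i)}$.

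To finish, note that $P$ and $P'$ share the $Q$ part, so $\norm{P-P'}_1=\norm{R-R'}_1\le\norm{R}_1+\norm{R'}_1=\delta$, and hence $\langle P',C\rangle-\langle P,C\rangle\le\Cmax\norm{P-P'}_1\le\Cmax\delta$ by the obvious bound $|\langle\cdot,C\rangle|\le\Cmax\norm{\cdot}_1$. Since $P$ is optimal for $\mu^{(i-1)}$ and $P'$ is feasible for $\mu^{(i)}$, this yields $\MOT_C(\mu^{(i)})\le\MOT_C(\mu^{(i-1)})+\Cmax\delta$; running the symmetric construction starting from an optimal coupling for $\mu^{(i)}$ gives the reverse inequality, so $|\MOT_C(\mu^{(i-1)})-\MOT_C(\mu^{(i)})|\le\Cmax\norm{\mu_i-\mu_i'}_1$, and the telescope from the first paragraph completes the proof.

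I expect no genuine obstacle here: the only points needing care are the degenerate slices $[\mu_i]_j=0$ (handled above) and the non-emptiness of the ``transportation polytope'' housing $R'$, which is immediate once one observes that all its target marginals have the same mass $\delta/2$ (witnessed by the explicit product tensor above). The single piece of mild cleverness is choosing to cut $P$ along the shared mass $\mu_i\wedge\mu_i'$, so that exactly a mass-$\delta/2$ piece must be re-glued and the $\ell_1$ perturbation of the coupling is controlled by $\delta$; an alternative route would be to bound the oscillation of an optimal solution of the dual \eqref{MOT-D}, but the primal argument above is cleaner and self-contained.
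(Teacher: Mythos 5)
Your argument is correct, and it in fact yields the sharper constant $\Cmax$ (which of course implies the stated $2\Cmax$); the only blemishes are cosmetic, e.g.\ for $i'<i$ the unchanged marginals of $P'$ are $\mu'_{i'}$ rather than $\mu_{i'}$, which does not affect anything. The route differs from the paper's in how the nearby feasible coupling is produced: both proofs share the skeleton ``take an optimal $P$ for one set of marginals, exhibit a feasible $\hat P$ for the other with $\norm{\hat P - P}_1$ controlled by $\norm{\mu-\mu'}_1$, then conclude by H\"older,'' but the paper obtains $\hat P$ by citing the rounding algorithm of Lin--Ho--Jordan as a black box, which gives $\norm{\hat P - P}_1 \leq 2\norm{\mu-\mu'}_1$ and hence the factor $2\Cmax$, whereas you build the perturbed coupling by hand---telescoping over coordinates and, for each single-coordinate change, cutting $P$ along the shared mass $\mu_i \wedge \mu_i'$ and regluing a product tensor with the prescribed marginals. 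Your construction buys self-containedness (no appeal to the rounding lemma) and the better Lipschitz parameter $\Cmax$, since re-gluing only mass $\norm{\mu_i-\mu_i'}_1/2$ per coordinate perturbs the coupling by at most $\norm{\mu_i-\mu_i'}_1$ in $\ell_1$; the paper's version buys brevity, and the factor $2$ is immaterial for its use in Theorem~\ref{thm:main:approx}, where only a $\poly(n,k)$ Lipschitz bound is needed.
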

\begin{proof}
	Let $\mu, \mu' \in (\Delta_n)^k$. By symmetry, it suffices to show that
	\begin{align*}
		\min_{P' \in \Coupp} \langle P', C \rangle
		\leq 
		\min_{P\in \Coup} \langle P, C \rangle 
		 + 2 \Cmax \|\mu- \mu'\|_1.
	\end{align*}
Let $P^*$ be an optimal solution for the optimization over $\Coup$. By the rounding algorithm in~\citep{LinHoJor19}, there exists $\hat{P} \in \Coupp$ such that the entrywise $\ell_1$ norm $\|\hat{P} - P^*\|_1 \leq 2 \|\mu - \mu'\|_1$. Thus 
	\begin{align*}
		\min_{P' \in \Coupp} \langle P', C \rangle
		\leq 
		\langle \hat{P}, C \rangle
		= 
		\langle P^*, C \rangle + \langle \hat{P} - P^*, C \rangle.
	\end{align*}
	By construction of $P^*$, the first term $\langle P^*, C \rangle = \min_{P\in \Coup} \langle P, C \rangle $. By H\"older's inequality and the construction of $\hat{P}$, the second term $\langle \hat{P} - P^*, C \rangle \leq \Cmax \|\hat{P} - P^*\|_1  \leq 2 \Cmax 
	\|\mu - \mu'\|_1$. 
\end{proof}

\section{Application: costs with super-constant rank}\label{sec:applications:lr}

Recent work has given a polynomial time algorithm for approximate $\MOT$ when the cost is a constant-rank tensor given in factored form \citep{AltBoi20mot}. A natural algorithmic question is whether the dependence on the rank can be improved: is there an algorithm whose runtime is simultaneously polynomial in $n$, $k$, and the rank $r$? Here we show that, under standard complexity theory assumptions, the answer is no. Our result provides a converse to \citep{AltBoi20mot}, and justifies the constant-rank regime studied in \citep{AltBoi20mot}.

\begin{prop}[Hardness of $\MOT$ for low-rank costs]\label{prop:lr-hard}
	Assuming $\P\neq \NP$, there does not exist a  $\poly(n,k,r)$-time deterministic algorithm for solving $\MOT_C$ for costs $C$ given by a rank-$r$ factorization.
\end{prop}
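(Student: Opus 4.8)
The plan is to apply the reduction toolkit (Theorem~\ref{thm:main:exact}) to reduce the problem to showing that $\MinO_C$ is $\NP$-hard for some family of rank-$r$ cost tensors given in factored form. Concretely, Theorem~\ref{thm:main:exact} says that a $\poly(n,k,r)$-time deterministic algorithm for $\MOT_C$ would yield a $\poly(n,k,r)$-time deterministic algorithm for $\MinO_C(p)$ for arbitrary weights $p \in \R^{n\times k}$. By Remark~\ref{rem:pequalszero}, it suffices to handle the case $p=0$, i.e., to show that computing the minimum entry $\min_{\jvec \in [n]^k} C_{\jvec}$ of a rank-$r$ tensor given in factored form is $\NP$-hard. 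So the whole proof reduces to exhibiting an $\NP$-hard problem that encodes naturally as ``find the smallest entry of a low-rank tensor presented in factored form.''

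\textbf{Choice of hard problem.} The natural candidate is an $\NP$-hard combinatorial problem whose objective decomposes as a sum over $k$ coordinates, each contributing a term of the form $\langle a^{(t)}_{j_t}, \cdot\rangle$, so that the overall objective is an inner product of $k$ vectors --- this is exactly a rank-one contribution, and summing $r$ such gives a rank-$r$ tensor. A clean choice is \textsc{Max-$2$-Coloring} / \textsc{Max-Cut}-type problems, or more directly an $\NP$-hard problem like \textsc{Independent Set} or \textsc{SAT} encoded with $n=2$ (so each coordinate $j_t \in \{0,1\}$ is a Boolean variable). With $n=2$, a rank-$r$ tensor in factored form is $C_{j_1,\dots,j_k} = \sum_{s=1}^r \prod_{t=1}^k [u^{(s)}_t]_{j_t}$ where each $u^{(s)}_t \in \R^2$, and $\min_\jvec C_\jvec$ over $\{0,1\}^k$ is a multilinear polynomial minimization. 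One then shows that, say, the number of satisfied clauses of a 3-SAT (or Max-2-SAT) instance on $k$ variables can be written as such a sum with $r = \poly(k)$ terms --- each clause contributing a bounded number of rank-one terms --- and hence finding the minimum entry is $\NP$-hard. (Alternatively one can reduce from \textsc{Max-Clique}/\textsc{Independent-Set} via a quadratic form, writing $\sum_{t<t'} A_{tt'} j_t j_{t'}$ as $O(k^2)$ rank-one tensors.)

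\textbf{Key steps in order.} First, fix the target $\NP$-hard problem and its standard hardness (I would use Max-2-SAT or Independent Set, both $\NP$-hard). Second, given an instance on $k$ variables, construct $n=2$ and an explicit rank-$r$ factorization with $r = \poly(k)$ such that $C_{j_1,\dots,j_k}$ equals (a constant minus) the objective value at the assignment $(j_1,\dots,j_k)$; verify each rank-one factor has $\poly(k)$-bit entries, so the factored representation has size $\poly(n,k,r) = \poly(k)$. Third, observe $\min_\jvec C_\jvec$ recovers the optimum of the $\NP$-hard problem, so computing it is $\NP$-hard; this is $\MinO_C(0)$. Fourth, invoke Remark~\ref{rem:pequalszero} and Theorem~\ref{thm:main:exact}: a $\poly(n,k,r)$-time deterministic algorithm for $\MOT_C$ would solve $\MinO_C(0)$ in $\poly(n,k,r)$ deterministic time, contradicting $\P \neq \NP$.

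\textbf{Main obstacle.} The only real work is the encoding step: finding an $\NP$-hard problem whose objective is \emph{exactly} a low-rank multilinear form in the indicator coordinates, with all factor entries of polynomial bit complexity. This is not hard for pairwise-interaction objectives (rank $O(k^2)$ suffices, one rank-one term per interaction, possibly a couple each to handle the off-diagonal structure), but one should be slightly careful that the construction works over $n=2$ with the ``one-hot'' constraint $\|x_t\|_1 = 1$ that $\cS$ imposes --- i.e., that evaluating $C$ on $\cS$ really does range over all Boolean assignments, which it does since $\cS \cong \{e_0,e_1\}^k \cong \{0,1\}^k$. A minor additional care is that the paper's rank-$r$ factorization is of the tensor $C$ itself (not a ``constant plus low rank''), so any additive constant must be folded into the factorization, which is harmless since the all-ones tensor has rank one. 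Everything else is bookkeeping.
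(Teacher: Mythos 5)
Your proposal is correct and follows essentially the same route as the paper: reduce via Theorem~\ref{thm:main:exact} to showing that $\MinO_C(0)$ (the minimum-entry problem) is $\NP$-hard for explicitly factored low-rank costs, and build the low-rank factorization by writing each pairwise interaction of an $\NP$-hard combinatorial objective as a constant number of rank-one tensors padded with all-ones vectors on the remaining slices. The only difference is the choice of hard problem---the paper encodes $k$-clique in a $k$-partite graph (rank at most $n^2k^2$, Lemmas~\ref{lem:lr-Tg} and \ref{lem:lr-hard-min}) rather than a Boolean CSP with $n=2$---which is immaterial to the argument.
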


Our impossibility result further extends to approximate computation.

\begin{prop}[Hardness of approximate $\MOT$ for low-rank costs]\label{prop:lr-hard-approx}
	Assuming $\NP\not\subset \BPP$, there does not exist a $\poly(n,k,r,\tfrac{\Cmax}{\eps})$-time randomized algorithm for approximating $\MOT_C$ to $\eps$ additive accuracy for costs $C$ given by a rank-$r$ factorization.
\end{prop}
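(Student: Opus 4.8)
The plan is to combine the approximate reduction of Theorem~\ref{thm:main:approx} with a low-rank encoding of an NP-hard problem into the minimum-entry special case $\MinO_C(0)$ (cf.\ Remark~\ref{rem:pequalszero}), mirroring the proof of the exact statement Proposition~\ref{prop:lr-hard} but now tracking how the $\eps$-error propagates through the reduction.

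First I would fix the hard instance family. Recall that $\MinO_C(0)$ asks for $\min_{\jvec} C_{\jvec}$, and that a rank-$r$ tensor in factored form, $C_{\jvec} = \sum_{\ell=1}^r \prod_{i=1}^k [u_\ell^{(i)}]_{j_i}$, is expressive enough to make this NP-hard: for instance (taking $n=2$), encode a \textsc{Max-3-Sat} instance on $k$ Boolean variables by letting each of its $m$ clauses contribute one rank-one term equal to the indicator --- over the clause's three relevant coordinates, with the all-ones vector on the remaining $k-3$ coordinates --- that the clause is violated by $\jvec$. Then $C_{\jvec}$ counts the clauses violated by the assignment $\jvec$, its entries are integers in $\{0,\dots,m\}$ so $\Cmax \le m = \poly(k)$, the rank is $r = m = \poly(n,k)$, and $\min_{\jvec} C_{\jvec}$ equals $0$ on satisfiable instances and is $\geq 1$ otherwise. (Up to cosmetic changes this is the family behind Proposition~\ref{prop:lr-hard}; any rank-$\poly(n,k)$ family with integer entries bounded by $\poly(n,k)$ and a yes/no gap of at least $1$ works equally well.)

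Next I would run the reduction. Suppose for contradiction there is a $\poly(n,k,r,\Cmax/\eps)$-time randomized algorithm $\cA$ approximating $\MOT_C$ to additive accuracy $\eps$. After a standard amplification of $\cA$'s success probability (repetition and majority vote), we may treat $\cA$ as an $\eps$-accurate $\MOT_C$ oracle that answers all $\poly(n,k,\Cmax/\eps)$ queries made by the algorithm of Theorem~\ref{thm:main:approx} correctly except with small constant probability. Invoking Theorem~\ref{thm:main:approx} with $p=0$ then produces, in $\poly(n,k,\Cmax/\eps)$ time, a value within $\eps\cdot\poly(n,k)$ of $\min_{\jvec} C_{\jvec}$, with probability a constant bounded away from $1/2$. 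Choosing $\eps := 1/(3\,\poly(n,k))$, where $\poly(n,k)$ is the accuracy-loss factor of Theorem~\ref{thm:main:approx}, makes this additive error $<1/2$, so rounding to the nearest integer recovers $\min_{\jvec} C_{\jvec}$ exactly and hence decides satisfiability. With this choice of $\eps$ and since $\Cmax \le \poly(n,k)$ we have $\Cmax/\eps = \poly(n,k)$, so the total running time is $\poly(n,k,r,\poly(n,k)) = \poly(n,k,r)$, which is polynomial in the instance size. A final round of amplification then places an NP-hard problem in $\BPP$, contradicting $\NP \not\subset \BPP$.

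The step I expect to require the most care is the parameter bookkeeping in the previous paragraph: one must check that the $\poly(n,k)$ blow-up in the approximation guarantee of Theorem~\ref{thm:main:approx} can be absorbed by shrinking $\eps$ \emph{without} the resulting $\Cmax/\eps$ --- and hence the running time --- escaping $\poly(n,k,r)$. This goes through precisely because the hard family has $\Cmax = \poly(n,k)$ together with an $\Omega(1)$ (indeed $\geq 1$) gap between yes- and no-instances; a family whose gap were tiny relative to $\Cmax$ would break the argument, which is why the encoding is chosen as above. A secondary, benign point is that Theorem~\ref{thm:main:approx} is itself randomized, so this route necessarily yields hardness under $\NP \not\subset \BPP$ rather than under $\P \neq \NP$, matching the statement.
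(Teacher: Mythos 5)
Your proposal is correct and follows essentially the same route as the paper: apply the approximate reduction of Theorem~\ref{thm:main:approx} with $p=0$ to an integer-valued, rank-$\poly(n,k)$ cost family with $\Cmax\le\poly(n,k)$, and use the integrality gap of at least $1$ to absorb the $\eps\cdot\poly(n,k)$ accuracy loss (cf.\ Lemma~\ref{lem:lr-hard-min}). The only differences are cosmetic: the paper encodes $k$-clique in a $k$-partite graph via the tensor $-T_G$ of Lemma~\ref{lem:lr-Tg} rather than your \textsc{Max-3-Sat} encoding with $n=2$, and your explicit parameter bookkeeping and amplification of the randomized oracle are details the paper leaves implicit.
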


The proof encodes the hard problem of finding a large clique in a $k$-partite graph as an instance of $\MOT_C$ in which $C$ has an explicit low-rank factorization. We define the following notation: for a $k$-partite graph $G$ on $nk$ vertices $v_{i,j}$ for $i \in [k]$ and $j \in [n]$, let $T_G \in \Rntk$ denote the tensor with $(j_1,\dots,j_k)$-th entry equal to the number of edges in the induced subgraph of $G$ with vertices $\{v_{1,j_1}, \dots, v_{k,j_k}\}$. 

\begin{lemma}[$T_G$ is low-rank]\label{lem:lr-Tg}
	For any $k$-partite graph $G$ on $nk$ vertices, $\rank(T_G) \leq n^2k^2$. Moreover, a factorization of $T_G$ with this rank is computable from $G$ in $\poly(n,k)$ time.
\end{lemma}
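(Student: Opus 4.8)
The plan is to write $T_G$ explicitly as a short sum of rank-one tensors, one per edge of $G$, which simultaneously bounds the rank and exhibits the factorization. The starting observation is that since $G$ is $k$-partite with parts $V_i = \{v_{i,1},\dots,v_{i,n}\}$, every edge of $G$ joins vertices in two distinct parts. Consequently, for any tuple $\jvec = (j_1,\dots,j_k) \in [n]^k$, the induced subgraph on $\{v_{1,j_1},\dots,v_{k,j_k}\}$ has exactly those edges of $G$ of the form $\{v_{i,j_i},v_{i',j_{i'}}\}$ with $i < i'$, so that
\[
[T_G]_{\jvec} \;=\; \sum_{\substack{\{v_{i,a},v_{i',b}\}\in E(G)\\ i<i'}} \1[j_i = a]\,\1[j_{i'}=b].
\]

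Next I would verify that each summand is a rank-one tensor. For an edge $\{v_{i,a},v_{i',b}\}$ with $i<i'$, define vectors $w^{(1)},\dots,w^{(k)} \in \R^n$ by $w^{(i)} = e_a$, $w^{(i')} = e_b$, and $w^{(m)} = \bone_n$ for $m \notin \{i,i'\}$; then $[w^{(1)}\otimes\cdots\otimes w^{(k)}]_{\jvec} = \prod_{m} [w^{(m)}]_{j_m} = \1[j_i=a]\,\1[j_{i'}=b]$, which is precisely the corresponding summand. Hence $T_G$ is a sum of $|E(G)|$ rank-one tensors, so $\rank(T_G) \le |E(G)| \le \binom{k}{2} n^2 \le n^2 k^2$, using that there are at most $n^2$ edges between any fixed pair of parts.

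For the computational claim, I would note that the factorization just described is fully explicit: enumerate the (at most $n^2 k^2$) edges of $G$, and for each edge emit the $k$ vectors $w^{(1)},\dots,w^{(k)}$ as above, each of $O(n)$ size. This runs in $\poly(n,k)$ time and outputs a rank-$\le n^2 k^2$ factorization of $T_G$.

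There is no substantive obstacle in this lemma; the only point meriting care is to make explicit that $k$-partiteness is exactly what makes the induced edge count split cleanly as a sum over unordered pairs $\{i,i'\}$ of distinct parts — there are no within-part edges to account for, and no edge is double-counted since each edge determines a unique pair of parts. Everything else is direct verification of the displayed identity and of the rank-one form of each term.
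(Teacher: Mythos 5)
Your proposal is correct and is essentially the paper's own argument: one rank-one tensor per edge, built from the two indicator vectors on the edge's slices and all-ones vectors elsewhere, summed over the at most $n^2k^2$ edges. The only (harmless) difference is that you bound the edge count slightly more sharply via $\binom{k}{2}n^2$, whereas the paper simply uses $(nk)^2$.
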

\begin{proof}
	Consider an edge $(v_{i,j_i}, v_{i',j_{i'}})$ between partitions $i,i' \in [k]$. Consider the rank-$1$ tensor formed by the outer product of the indicator vectors $e_{j_i}$ and $e_{j_{i'}}$ on respective slices $i$ and $i'$, and the all-ones vector $\bone_n$ on all other slices $\ell \in [k] \setminus \{i,i'\}$. This tensor takes value $1$ on all tuples in $[n]^k$ with $i$-th coordinate $j_i$ and $i'$-th coordinate $j_{i'}$, and takes value $0$ elsewhere. Summing up such a rank-$1$ tensor for each edge of $G$---of which there are at most $(nk)^2$---yields the desired factorization.
\end{proof}

\begin{lemma}[Hardness of $\MinO$ for low-rank costs]\label{lem:lr-hard-min}
	Assuming $\P \neq \NP$, there is no $\poly(n,k,r)$-time deterministic algorithm for solving $\MinO_C$ for costs $C$ given by a rank-$r$ factorization. Moreover, assuming $\NP\not\subset \BPP$, 	there is $\poly(n,k,r,\tfrac{\Cmax}{\eps})$-time randomized algorithm for $\eps$-approximate additive computation.
\end{lemma}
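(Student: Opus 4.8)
The plan is to reduce the NP-hard \emph{multicolored $k$-clique} problem directly to $\MinO_C$ with the trivial weights $p = 0$. Recall the multicolored clique problem: given a $k$-partite graph $G$ on the vertex set $\{v_{i,j} : i \in [k],\, j \in [n]\}$ with parts $V_i = \{v_{i,1}, \dots, v_{i,n}\}$, decide whether there is a transversal $\{v_{1,j_1}, \dots, v_{k,j_k}\}$ whose induced subgraph is a clique. This is NP-hard by an immediate reduction from \textsc{Clique}: given a graph $H$ and target size $k$, take $k$ disjoint copies of $V(H)$ as the parts and join a copy of $v$ in part $i$ to a copy of $w$ in part $i' \neq i$ exactly when $vw \in E(H)$; then the resulting $k$-partite graph has a transversal clique if and only if $H$ has a $k$-clique, since a transversal clique picks pairwise-adjacent, hence pairwise-distinct, vertices of $H$.

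Given such a $G$, set $C := -T_G$, where $T_G$ is the edge-counting tensor of Lemma~\ref{lem:lr-Tg}. By that lemma $C$ has rank at most $n^2k^2$, and a rank-$(\leq n^2k^2)$ factorization of $C$ is computable from $G$ in $\poly(n,k)$ time (negate the factorization of $T_G$). Moreover every entry of $T_G$ is an integer in $\{0,1,\dots,\binom{k}{2}\}$, as it counts the edges among the $k$ distinct vertices $v_{1,j_1},\dots,v_{k,j_k}$, so $\Cmax = \binom{k}{2} \leq k^2$. The key observation is that, in the notation of Definition~\ref{def:min},
\[
  \MinO_C(0) \;=\; \min_{\jvec \in [n]^k} \bigl(-[T_G]_{\jvec}\bigr) \;=\; -\max_{\jvec \in [n]^k} [T_G]_{\jvec},
\]
and $\max_{\jvec}[T_G]_{\jvec} = \binom{k}{2}$ if $G$ has a multicolored clique, while $\max_{\jvec}[T_G]_{\jvec} \leq \binom{k}{2} - 1$ otherwise (the entries are integers, and a transversal induces all $\binom{k}{2}$ possible edges precisely when it is a clique). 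Thus $\MinO_C(0)$ is an integer that determines the answer to the multicolored clique instance.

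The two claims now follow. For the exact part, a $\poly(n,k,r)$-time deterministic algorithm for $\MinO_C$, run on the instance $(C=-T_G,\, p=0)$ with $r \leq n^2k^2$, would compute $\MinO_C(0)$ in $\poly(n,k)$ time and hence decide multicolored clique in deterministic polynomial time, contradicting $\P \neq \NP$. For the approximate part, run the hypothesized $\poly(n,k,r,\Cmax/\eps)$-time randomized algorithm with accuracy parameter $\eps = \tfrac14$; then $\Cmax/\eps = 4\binom{k}{2} = O(k^2)$, so its running time is $\poly(n,k)$, and with probability $\geq \tfrac23$ it returns a value within $\tfrac14 < \tfrac12$ of the integer $\MinO_C(0)$, which can then be recovered exactly by rounding. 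This would decide multicolored clique by a polynomial-time randomized algorithm, contradicting $\NP \not\subset \BPP$. There is no genuinely hard step once Lemma~\ref{lem:lr-Tg} is in hand; the only point requiring care is the bookkeeping in the approximate reduction, namely that the ``clique versus no clique'' gap in $\max_{\jvec}[T_G]_{\jvec}$ is a full unit while $\Cmax$ is only polynomially large, so that $\Cmax/\eps$ stays polynomial for a constant-accuracy algorithm. (This reduction also realizes the $p=0$ case discussed in Remark~\ref{rem:pequalszero}: hardness already appears for computing the minimum entry of a low-rank tensor.)
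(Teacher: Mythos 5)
Your proposal is correct and follows essentially the same route as the paper: reduce $k$-clique in a $k$-partite graph to computing the maximum entry of $T_G$, i.e.\ $\MinO_{-T_G}(0)$, invoke Lemma~\ref{lem:lr-Tg} for the polynomial-time low-rank factorization, and use integrality of the entries together with $\Cmax \leq k^2$ to get hardness even of constant-accuracy additive approximation. Your only additions are spelling out the standard NP-hardness reduction for the multicolored clique problem (which the paper simply asserts) and the explicit rounding bookkeeping in the approximate case, both of which match the paper's intent.
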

\begin{proof}
	Deciding whether there exists a $k$-clique in a $k$-partite graph $G$ on $nk$ vertices is $\NP$-hard. This problem reduces to computing the maximal entry in $T_G$, which is equivalent to solving $\MinO_C(0)$ for $C=-T_G$. The first statement then follows since a low-rank factorization of $-T_G$ can be found in $\poly(n,k)$ time by Lemma~\ref{lem:lr-Tg}. For the second statement, note that since the entries of $-T_G$ are integral, it is also $\NP$-hard to solve $\MinO_C(0)$ to additive error $\Cmax / 10k^2 \leq 0.1$.
\end{proof}

\begin{proof}[Proof of Proposition~\ref{prop:lr-hard}]
By Theorem~\ref{thm:main:exact}, a $\poly(n,k,r)$-time deterministic algorithm for $\MOT_C$ on rank-$r$ costs implies a $\poly(n,k,r)$-time deterministic algorithm for $\MinO_C$ on rank-$r$ costs. Assuming $\P \neq \NP$, this contradicts Lemma~\ref{lem:lr-hard-min}.
\end{proof}

\begin{proof}[Proof of Proposition~\ref{prop:lr-hard-approx}]
By Theorem~\ref{thm:main:approx}, a $\poly(n,k,r,\tfrac{\Cmax}{\eps})$-time randomized algorithm for $\MOT_C$ on rank-$r$ costs $C$ implies a $\poly(n,k,r,\tfrac{\Cmax}{\eps})$-time randomized algorithm for $\MinO_C$ on rank-$r$ costs $C$. Assuming $\NP \not\subset \BPP$, this contradicts Lemma~\ref{lem:lr-hard-min}.
\end{proof}

\section{Application: costs with full pairwise interactions}\label{sec:applications:pairwise}

Many studied $\MOT$ costs, such as the Wasserstein barycenter cost and Coulomb cost, have the following structure: they decompose into a sum of pairwise interactions, as
\begin{align}
	C_{j_1,\ldots,j_k} = \sum_{1 \leq i < i' \leq k} g_{i,i'}(j_i,j_{i'})
	\label{eq:pairwise}
\end{align}
for some functions $g_{i,i'} : [n] \times [n] \to \RR$. This decomposability structure allows for a polynomial-size implicit representation of the cost tensor. It is a natural question whether this generic structure can be exploited to obtain polynomial-time algorithms for $\MOT$. We show that the answer is no: there are $\MOT$ costs that are decomposable into pairwise interactions, but are $\NP$-hard to solve.

\begin{prop}[Hardness of $\MOT$ for pairwise-decomposable costs]\label{prop:pd-hard}
	Assuming $\P\neq \NP$, there does not exist a  $\poly(n,k)$-time deterministic algorithm for solving $\MOT_C$ for costs $C$ of the form~\eqref{eq:pairwise}.
\end{prop}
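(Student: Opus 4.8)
The plan is to use the reduction toolkit of \S\ref{sec:reduction} to replace the task of proving hardness of $\MOT_C$ by the cleaner task of proving $\NP$-hardness of $\MinO_C$, and then to observe that $\MinO_C$ for a pairwise-decomposable cost is literally the problem of minimizing a pairwise energy over the complete graph $K_k$ with $n$ labels per vertex, which is $\NP$-hard via a one-line encoding of \textsc{Max-Cut}. Concretely: by Theorem~\ref{thm:main:exact}, a $\poly(n,k)$-time deterministic algorithm for $\MOT_C$ on costs of the form~\eqref{eq:pairwise} would yield a $\poly(n,k)$-time deterministic algorithm for $\MinO_C$ on the same class of costs; so it suffices to show the latter is $\NP$-hard, and by Remark~\ref{rem:pequalszero} it suffices to do so already for $p = 0$, i.e., to show that computing the minimum entry $\min_{\jvec \in [n]^k} C_{\jvec}$ of a pairwise-decomposable tensor is $\NP$-hard.

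For that I would reduce from \textsc{Max-Cut}, which is $\NP$-hard already as a decision problem~\citep{karp1972reducibility}: given a graph $H$ on vertex set $[k]$ with edge set $E(H)$ and a threshold $t$, decide whether $H$ admits a cut of size at least $t$. Take $n = 2$, interpret the two labels as the two sides of a cut, and for $1 \leq i < i' \leq k$ set $g_{i,i'}(a,b) := -\1[a \neq b]$ if $\{i,i'\} \in E(H)$ and $g_{i,i'} \equiv 0$ otherwise. This produces a cost $C$ of the form~\eqref{eq:pairwise} with an $O(k^2)$-size implicit representation and integer entries of magnitude at most $|E(H)| \leq \binom{k}{2}$, so the hypotheses of Theorem~\ref{thm:main:exact} (including the bit-complexity convention of \S\ref{sec:prelim}) are met. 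For any $\jvec = (j_1,\dots,j_k) \in \{1,2\}^k$, viewed as the $2$-partition placing vertex $i$ on side $j_i$, we have that $C_{\jvec}$ equals minus the number of edges of $H$ cut by $\jvec$, so $\min_{\jvec} C_{\jvec}$ equals minus the maximum cut value of $H$. Hence any algorithm computing the minimum entry of a pairwise-decomposable cost decides \textsc{Max-Cut}; this proves $\NP$-hardness of $\MinO_C$, and therefore of $\MOT_C$, for costs of the form~\eqref{eq:pairwise} — in fact already in the restricted regime $n = 2$.

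I expect no genuine obstacle here: the nontrivial step — converting a hardness statement about the exponential-size linear program $\MOT$ into a hardness statement about an ordinary combinatorial problem — has already been done once and for all in Theorem~\ref{thm:main:exact}, and what remains is only to recognize pairwise-decomposable costs as instances of pairwise energy minimization and to pick a convenient $\NP$-hard special case with no unary terms (so that it fits the template~\eqref{eq:pairwise} verbatim). The only items worth double-checking are bookkeeping — that the encoded cost really does have the form~\eqref{eq:pairwise}, has a $\poly(n,k)$-size representation, and has entries of $\poly(n,k)$ bit complexity — all of which are immediate from the construction. Finally, I would note that since the interaction graph $K_k$ underlying this cost has treewidth $k-1$, this proposition furnishes the promised converse to the result of~\citep{AltBoi20mot} that $\MOT$ is $\poly(n,k)$-time solvable for costs decomposable into local interactions of bounded treewidth.
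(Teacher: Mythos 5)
Your proof is correct, and it follows the paper's general template (Theorem~\ref{thm:main:exact} to pass from $\MOT_C$ to $\MinO_C$, then $\NP$-hardness of computing the minimum entry, i.e., the case $p=0$), but it instantiates the hard combinatorial problem differently. The paper does not introduce a new reduction here at all: it observes that the tensor $-T_G$ already used for the low-rank hardness results (maximal entry of $T_G$ encodes $k$-clique in a $k$-partite graph, Lemma~\ref{lem:lr-hard-min}) is itself pairwise-decomposable, $(-T_G)_{j_1,\ldots,j_k} = \sum_{i<i'} -\mathds{1}[(v_{i,j_i},v_{i',j_{i'}})\in E]$, so Propositions~\ref{prop:pd-hard} and~\ref{prop:pd-hard-approx} follow verbatim from the proofs of Propositions~\ref{prop:lr-hard} and~\ref{prop:lr-hard-approx}. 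You instead reduce from \textsc{Max-Cut} with $n=2$, placing vertex $i$ on side $j_i$ and setting $g_{i,i'}(a,b)=-\1[a\neq b]$ on edges; this is also sound (the cost is of the form~\eqref{eq:pairwise}, has integer entries of magnitude at most $\binom{k}{2}$, and the reduction stays within the cost class since Theorem~\ref{thm:main:exact} only varies the marginals). The paper's choice buys economy—one hard instance serves both the low-rank and pairwise-interaction sections, and its integrality immediately yields the approximate version Proposition~\ref{prop:pd-hard-approx}—while your choice buys a slightly sharper statement (hardness already at $n=2$, i.e., for binary-label pairwise energies) and in fact coincides with the \textsc{Max-Cut} encoding the paper itself uses later for supermodular costs in Proposition~\ref{prop:hard-supmod}; your instance is likewise integer-valued, so it would extend to the inapproximability statement in the same way if needed.
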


Our impossibility result further extends to approximate computation.

\begin{prop}[Hardness of approximate $\MOT$ for pairwise-decomposable costs]\label{prop:pd-hard-approx}
	Assuming $\NP\not\subset \BPP$, there does not exist a $\poly(n,k,\tfrac{\Cmax}{\eps})$-time randomized algorithm for approximating $\MOT_C$ to $\eps$ additive accuracy for costs $C$ of the form~\eqref{eq:pairwise}.
\end{prop}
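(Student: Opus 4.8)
The plan is to mirror \S\ref{sec:applications:lr}: first use the toolkit of \S\ref{sec:reduction} to reduce proving hardness of $\MOT_C$ to proving hardness of $\MinO_C$, and then exhibit a concrete pairwise-decomposable cost $C$ for which $\MinO_C$ --- in fact already $\MinO_C(0)$ --- is $\NP$-hard. Concretely, I would state and prove a lemma paralleling Lemma~\ref{lem:lr-hard-min}: ``Assuming $\P \neq \NP$, there is no $\poly(n,k)$-time deterministic algorithm for $\MinO_C$ over costs $C$ of the form~\eqref{eq:pairwise}, and assuming $\NP \not\subset \BPP$ there is no $\poly(n,k,\tfrac{\Cmax}{\eps})$-time randomized algorithm for $\eps$-approximate additive computation.'' Given such a lemma, Propositions~\ref{prop:pd-hard} and~\ref{prop:pd-hard-approx} follow immediately from Theorems~\ref{thm:main:exact} and~\ref{thm:main:approx} exactly as Propositions~\ref{prop:lr-hard} and~\ref{prop:lr-hard-approx} followed from Lemma~\ref{lem:lr-hard-min}.

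The key observation for the lemma is that the edge-counting tensor $T_G$ from Lemma~\ref{lem:lr-Tg} is \emph{already} of the pairwise-decomposable form~\eqref{eq:pairwise}. Indeed, defining $g_{i,i'}(a,b) := 1$ if $(v_{i,a}, v_{i',b})$ is an edge of the $k$-partite graph $G$ and $g_{i,i'}(a,b) := 0$ otherwise, one has $[T_G]_{j_1,\dots,j_k} = \sum_{1 \le i < i' \le k} g_{i,i'}(j_i,j_{i'})$ directly from the definition of $T_G$, since the number of edges in the subgraph induced by $\{v_{1,j_1},\dots,v_{k,j_k}\}$ equals the number of pairs $i<i'$ for which the corresponding two (distinct-part) vertices are adjacent. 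Hence $C := -T_G$ is of the form~\eqref{eq:pairwise} with $g_{i,i'}$ replaced by $-g_{i,i'}$, and it admits a $\poly(n,k)$-size implicit representation (the $\binom{k}{2}$ adjacency matrices of $G$). Now deciding whether $G$ contains a $k$-clique is $\NP$-hard, and $G$ contains a $k$-clique iff the maximum entry of $T_G$ equals $\binom{k}{2}$ (the maximum possible over any $k$ vertices, one per part), i.e.\ iff $\MinO_C(0) = -\binom{k}{2}$; so computing $\MinO_C(0)$ for this pairwise-decomposable $C$ is $\NP$-hard, giving the first (exact) statement. For the approximate statement, the entries of $-T_G$ are integers in $\{-\binom{k}{2},\dots,0\}$, so $\Cmax \le \binom{k}{2} \le k^2$, and computing $\MinO_C(0)$ to additive error smaller than $\tfrac12$ still determines the integer $\min_{\jvec} C_{\jvec}$ exactly and is therefore $\NP$-hard; feeding this into Theorem~\ref{thm:main:approx} and choosing $\eps = \Theta(1/\poly(n,k))$ so that the $\eps\cdot\poly(n,k)$ approximation guarantee falls below $\tfrac12$ (while keeping $\Cmax/\eps = \poly(n,k)$) contradicts $\NP \not\subset \BPP$.

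There is essentially no serious obstacle here: the entire content is the observation that the low-rank hard instance $T_G$ is simultaneously pairwise-decomposable, after which the result is a corollary of the reduction toolkit. The only items requiring care are bookkeeping: verifying coordinate-wise that $-T_G$ genuinely matches the template~\eqref{eq:pairwise}; confirming its implicit representation is of size $\poly(n,k)$; and checking that the polynomial blow-up factor in the accuracy guarantee of Theorem~\ref{thm:main:approx} can be absorbed by taking $\eps$ inverse-polynomially small without making $\Cmax/\eps$ super-polynomial.
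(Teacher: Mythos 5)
Your proposal is correct and follows essentially the same route as the paper: the paper's proof of Propositions~\ref{prop:pd-hard} and~\ref{prop:pd-hard-approx} is precisely the observation that $(-T_G)_{j_1,\ldots,j_k} = \sum_{1 \leq i < i' \leq k} -\mathds{1}[(v_{i,j_i}, v_{i',j_{i'}}) \in E]$ is of the form~\eqref{eq:pairwise}, after which the clique-hardness argument of Lemma~\ref{lem:lr-hard-min} and Theorems~\ref{thm:main:exact} and~\ref{thm:main:approx} goes through verbatim. Your bookkeeping on integrality, $\Cmax \leq k^2$, and the choice of inverse-polynomial $\eps$ matches the paper's treatment.
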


\begin{proof}[Proof of Propositions~\ref{prop:pd-hard} and \ref{prop:pd-hard-approx}] The proofs of Propositions~\ref{prop:pd-hard} and \ref{prop:pd-hard-approx} are the same as the proofs of Propositions~\ref{prop:lr-hard} and \ref{prop:lr-hard-approx} using the fact that for any graph $G = (V,E)$, the tensor $-T_G$ can be written as a sum of pairwise interactions: $(-T_G)_{j_1,\ldots,j_k} = \sum_{1 \leq i < i' \leq k} -\mathds{1}[(v_{i,j_i}, v_{i',j_{i'}}) \in E]$. 
\end{proof}

Propositions~\ref{prop:pd-hard} and \ref{prop:pd-hard-approx} provides converses to the result of \citep{AltBoi20mot}. Specifically, \citep[\S4]{AltBoi20mot}, considers $\MOT$ costs $C$ that decompose into local interactions as $C_{j_1,\ldots,j_k} = \sum_{S \in \mathcal{S}} g_S(\{j_i\}_{i \in S})$, and gives a polynomial-time algorithm in the case that the graph with vertices $[k]$ and edges $\{(i,i') : i,i' \in S \mbox{ for some } S \in \mathcal{S}\}$ has constant treewidth. Conversely, our hardness results in Propositions~\ref{prop:lr-hard} and \ref{prop:lr-hard-approx} show that bounded treewidth is necessary for polynomial-time algorithms. This is because costs of the form~\eqref{eq:pairwise} fall under the framework of decomposable costs in~\citep{AltBoi20mot} with non-constant treewidth of size $k-1$.

\section{Application: repulsive costs}\label{sec:applications:repulsive}

In this section, we investigate several $\MOT$ problems with repulsive costs that are of interest in the literature. 
We prove intractability results that clarify why---despite a growing literature (see, e.g., the survey~\citep{di2017optimal} and the references within)---these problems have resisted algorithmic progress.

\subsection{Determinantal cost}\label{ssec:repulsive:det}

A repulsive cost of interest in the $\MOT$ literature is the determinant cost (e.g.,~\citep{di2017optimal,CarNaz08}). This cost is given by:
\begin{equation}\label{eq:harddetcost}C_{j_1,\ldots,j_k} = -|\det(x_{j_1},\ldots,x_{j_k})|,\end{equation} where $x_1,\ldots,x_n \in \RR^k$ and $\det(x_{j_1},\ldots,x_{j_k})$ is the determinant of the $k \times k$ matrix whose columns are $x_{j_1},\ldots,x_{j_k}$. This is a repulsive cost in the sense that tuples with ``similar'' vectors are penalized with higher cost, see the survey~\citep{di2017optimal}. We prove that the $\MOT$ problem with this cost is $\NP$-hard. For convenience of notation, we think of the marginal distributions $\mu_1,\ldots,\mu_k$ as distributions in the simplex $\Delta_n$, and write $[\mu_i]_j$ to mean the mass of $\mu_i$ on $x_j$.

\begin{prop}[Hardness of $\MOT$ with determinant cost]\label{prop:hard:det}
Assuming $\P \neq \NP$, then there is no $\poly(n,k)$-time algorithm that given $x_1,\ldots,x_n \in \RR^k$ solves $\MOT_C$ for the cost $C$ in \eqref{eq:harddetcost}.
\end{prop}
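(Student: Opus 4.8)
The plan is to use the reduction toolkit (Theorem~\ref{thm:main:exact}) to reduce the problem to showing hardness of $\MinO_C$ for the determinantal cost. By Remark~\ref{rem:pequalszero}, it suffices to show hardness of $\MinO_C(0)$, i.e., that computing the minimum entry $\min_{(j_1,\dots,j_k)} -|\det(x_{j_1},\dots,x_{j_k})|$, equivalently the \emph{maximum} of $|\det(x_{j_1},\dots,x_{j_k})|$ over $k$-subsets (with repetition) of the vectors $x_1,\dots,x_n \in \RR^k$, is $\NP$-hard. So the crux is: given vectors in $\RR^k$, find the $k$-tuple maximizing the absolute value of the determinant. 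This is a known $\NP$-hard problem (maximum-volume / largest $j$-simplex problem, related to work of Gritzmann--Klee--Larman and Khachiyan on hardness of maximum-volume submatrix selection). I would either cite the appropriate hardness result directly or give a self-contained reduction.

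If a self-contained reduction is wanted, I would reduce from an $\NP$-hard combinatorial problem with a natural determinantal/volume encoding. A clean route: reduce from the problem of deciding whether a graph has an independent set (or clique) of size $k$, or alternatively from a subset-selection problem, by encoding combinatorial structure into vectors so that the Gram determinant $|\det|^2 = \det(X^T X)$ detects it. For instance, one can associate to each vertex a vector and arrange that a set of $k$ vectors has large $|\det|$ exactly when the corresponding vertices form the desired structure; the determinant (or its square, the Gram determinant, which by Hadamard-type bounds is maximized precisely by ``spread out'' / orthogonal-like configurations) serves as the combinatorial indicator. One then checks that the ``yes'' instances give determinant value bounded below by something, and ``no'' instances strictly below it, with a gap, so that even solving $\MOT_C$ (hence $\MinO_C$) exactly distinguishes the cases.

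The key steps, in order: (i) invoke Theorem~\ref{thm:main:exact} to reduce hardness of $\MOT_C$ to hardness of $\MinO_C$; (ii) by Remark~\ref{rem:pequalszero}, reduce further to the $p=0$ case, i.e. computing the minimum entry of $C$, which is $-\max |\det(x_{j_1},\dots,x_{j_k})|$; (iii) cite or prove that maximizing $|\det|$ over $k$-tuples drawn from a given set of vectors in $\RR^k$ is $\NP$-hard; (iv) conclude. Step (iii) is the main obstacle: I need the right hardness reference (the maximum-volume simplex / largest absolute determinant problem) or a careful gadget reduction that produces vectors in $\RR^k$ with the requisite determinant gap, paying attention to bit-complexity so the reduction is genuinely polynomial. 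A subtlety to watch: tuples may repeat an index $j_i = j_{i'}$, which forces the determinant to zero, so repetitions never help the maximization — this is convenient and should be noted explicitly. I would also double-check that the constructed vectors have $\poly(n,k)$ bit complexity, consistent with the standing bit-complexity assumption in the preliminaries.
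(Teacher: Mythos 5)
Your proposal follows essentially the same route as the paper: reduce via Theorem~\ref{thm:main:exact} to $\MinO_C$, specialize to $p=0$ so the task becomes computing $-\max_{\jvec}|\det(x_{j_1},\dots,x_{j_k})|$, and invoke the known $\NP$-hardness of maximizing the absolute determinant (the paper cites Papadimitriou's largest-subdeterminant result, i.e.\ exactly the maximum-volume/largest-simplex literature you point to). Your added remarks on repeated indices and bit complexity are fine but not needed beyond what the citation already covers.
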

\begin{proof}
	By Theorem~\ref{thm:main:exact}, it suffices to prove that the $\MinO_C$ problem is $\NP$-hard. We show this is true even if the input weights $p$ are identically $0$: in this case the $\MinO_C$ problem is to compute $\min_{\jvec} C_{\jvec} = -\max_{\jvec} |\det(x_{j_1},\dots,x_{j_k})|$ given $x_1,\ldots,x_n \in \RR^k$. This is $\NP$-hard by \citep{papadimitriou1984largest}.
\end{proof}

Rather than show additive inapproximability of $\MOT$ with determinant costs, we consider log-determinant costs since additive error on the logarithmic scale amounts to multiplicative error on the natural scale, which is more standard in the combinatorial-optimization literature on determinant maximization. Below, we show inapproximability of $\MOT$ with such log-determinant costs. Note that for technical reasons we upper-bound the cost at $0$ to avoid unbounded costs for tuples with null determinant:
\begin{equation}
	C_{j_1,\ldots,j_k} = \min(0,-\log |\det(x_{j_1},\ldots,x_{j_k})|). \label{eq:hardlogdetcost}
\end{equation}

\begin{prop}[Approximation hardness of $\MOT$ with log-determinant cost]\label{prop:hard:det:appx}
Assuming $\NP\not\subset \BPP$, then there is no $\poly(n,k,\Cmax/\eps)$ time algorithm that given $x_1,\ldots,x_n \in \RR^k$ approximates $\MOT_C$ to $\eps$ additive accuracy for the cost $C$ in \eqref{eq:hardlogdetcost}.
\end{prop}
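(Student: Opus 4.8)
The plan is to mimic the proof of Proposition~\ref{prop:hard:det}, replacing the exact reduction (Theorem~\ref{thm:main:exact}) by the approximate one (Theorem~\ref{thm:main:approx}) and, correspondingly, replacing the exact $\NP$-hardness of the largest-subdeterminant problem by an \emph{inapproximability} version. By Theorem~\ref{thm:main:approx} it suffices to produce an $\NP$-hard ``gap'' version of $\MinO_C$: a family of cost tensors of the form~\eqref{eq:hardlogdetcost}, together with a threshold $a$ that may depend on the instance, such that it is $\NP$-hard to decide whether $\MinO_C(0) \leq a$ or $\MinO_C(0) \geq a + \delta$, where $\delta$ is some fixed positive constant. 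Given such a gap problem, a purported $\poly(n,k,\Cmax/\eps)$-time randomized $\eps$-additive algorithm for $\MOT_C$ yields, via Theorem~\ref{thm:main:approx}, a randomized algorithm computing $\MinO_C(0)$ to additive error $\eps \cdot \poly(n,k)$; choosing $\eps = \delta/\poly(n,k) = 1/\poly(n,k)$ makes this error smaller than $\delta/2$, and since $\Cmax = \poly(n,k)$ on our instances (verified below) the algorithm then runs in randomized polynomial time and decides the $\NP$-hard gap problem, contradicting $\NP \not\subset \BPP$.

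To build the gap instance, note as in Proposition~\ref{prop:hard:det} that with $p = 0$ we have $\MinO_C(0) = \min_{\jvec} \min\big(0,\, -\log|\det(x_{j_1},\dots,x_{j_k})|\big)$, which equals $-\log\big(\max_{\jvec} |\det(x_{j_1},\dots,x_{j_k})|\big)$ whenever that maximum is at least $1$. Hence any instance $x_1,\dots,x_n \in \RR^k$ on which it is $\NP$-hard to approximate $\max_{\jvec} |\det(x_{j_1},\dots,x_{j_k})|$ within a multiplicative factor $\gamma > 1$ --- with the promise that this maximum is at least $1$ in \emph{both} the ``yes'' and ``no'' cases, so that the truncation $\min(0,\cdot)$ in~\eqref{eq:hardlogdetcost} is inactive at the optimum --- yields exactly such a gap instance of $\MinO_C(0)$, with $\delta = \log\gamma$. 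Inapproximability of this largest-subdeterminant (equivalently, maximum-volume subset-selection) problem within a factor $\gamma = c^k$ for an absolute constant $c > 1$ is known in the combinatorial optimization literature, and in any case a constant $\gamma$ already suffices for us and can be obtained from the exact hardness of~\citep{papadimitriou1984largest} by a standard gap-amplification argument; if a particular such reduction does not already guarantee that the optimal $|\det|$ exceeds $1$, we simply rescale all $x_i$ by a common factor $\lambda$, which multiplies every $k \times k$ subdeterminant by $\lambda^k$ and thus preserves the multiplicative gap, choosing $\log \lambda = \poly(n,k)/k$ so that bit complexities stay polynomial.

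It remains to check $\Cmax = \poly(n,k)$. Since the $x_i$ have $\poly(n,k)$ bit complexity, every $k \times k$ subdeterminant has absolute value at most $2^{\poly(n,k)}$, so $\log|\det(x_{j_1},\dots,x_{j_k})| \leq \poly(n,k)$; since each entry $C_{\jvec} = \min(0, -\log|\det|)$ equals $0$ when $|\det| < 1$ and lies in $[-\poly(n,k), 0]$ otherwise, we get $\Cmax = \max_{\jvec} |C_{\jvec}| \leq \poly(n,k)$, as needed. Assembling the three paragraphs gives the proposition.

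The step I expect to be the main obstacle is the second paragraph: securing an inapproximability statement for the largest-subdeterminant problem in the precise form required --- a multiplicative-gap instance whose optimum is bounded below by $1$ (so the truncation in~\eqref{eq:hardlogdetcost} does not collapse the gap into the uninformative value $0$) and whose defining vectors have polynomial bit complexity (so that Theorem~\ref{thm:main:approx} applies). Everything else is routine bookkeeping that parallels the low-rank and pairwise-interaction applications.
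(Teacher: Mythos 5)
Your proposal is correct and follows essentially the same route as the paper: the paper invokes Theorem~\ref{thm:main:approx} together with the known inapproximability result \citep[Theorem 3.2]{summa2014largest} that approximating $\min_{\jvec} -\log|\det(x_{j_1},\ldots,x_{j_k})|$ to additive error $0.0001$ is $\NP$-hard for integer vectors of polynomial bit complexity---with integrality (a nonzero integer determinant has absolute value $\geq 1$) playing the role of your rescaling trick to keep the truncation in~\eqref{eq:hardlogdetcost} inactive at the optimum---and with the same $\Cmax = \poly(n,k)$ check and choice of $\eps = 1/\poly(n,k)$. The one caveat is your fallback claim that a constant multiplicative gap could be extracted from the exact hardness of \citep{papadimitriou1984largest} by ``standard gap amplification'': that step is not justified (exact $\NP$-hardness does not generically yield constant-factor inapproximability), but it is also unnecessary, since the known multiplicative inapproximability of the largest-subdeterminant problem that you cite as your primary route is precisely what the paper uses.
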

\begin{proof}
	Let $x_1,\ldots,x_n \in \ZZ^k$ have $\poly(n,k)$ bits each. It is known to be $\NP$-hard to approximate $\min_{j_1,\ldots,j_k} - \log |\det(x_{j_1},\ldots,x_{j_k})|$ to within additive error $0.0001$ \citep[Theorem 3.2]{summa2014largest}. Since $x_1,\ldots,x_n$ span $\R^k$ without loss of generality, this is equivalent to approximating $\min_{j_1,\ldots,j_k} C_{j_1,\ldots,j_k}$ to within additive error $0.0001$. 
	But by Theorem~\ref{thm:main:approx}, given access to $\MOT_C$ computations with additive accuracy $\Cmax/\poly(n,k)$, we can approximate $\MinO_C(0) = \min_{j_1,\ldots,j_k} C_{j_1,\ldots,j_k}$ to within additive error $0.0001$ in $\poly(n,k)$ randomized time since $\Cmax$ is of $\poly(n,k)$ size here. Hence, assuming $\BPP \not\subset \NP$ there is no $\poly(n,k,\Cmax/\eps)$-time algorithm that solves $\MOT_C$ to accuracy $\eps$.
\end{proof}

\subsection{Supermodular cost}\label{ssec:repulsive:submod}

We now consider $\MOT$ problems given by discrete functions that are either submodular or supermodular (see, e.g.,~\citep{fujishige2005submodular} for definitions). Specifically, consider an $\MOT$ problem with $n = 2$ and a cost $C : \{0,1\}^k \to \RR$ that is submodular or supermodular.\footnote{Here, we index the marginals of the cost tensor $C$ with the ground set $\{0,1\}$, instead of $\{1,2\}$ as we would in the rest of the paper, to match the notational convention for supermodular/submodular functions.} Since $\{0,1\}^k$ corresponds to the power set of $[k]$, the cost $C$ can be equivalently viewed as a set function on subsets $S \subseteq [k]$. The $\MOT$ problem is of the form
\begin{align}
	\min_{P \in \cM(\Ber(x_1), \dots, \Ber(x_k))} \E_{S \sim P} \ C(S)
	\label{eq:supmod-mot}
\end{align} where $x_1, \dots, x_k \in [0,1]$ dictate the marginals $\mu_1, \dots, \mu_k$, and $\Ber(p)$ denotes a Bernoulli distribution taking value $1$ with probability $p$. In words,~\eqref{eq:supmod-mot} is an optimization problem over distributions $P$ on subsets of $[k]$, where the linear cost is the expected value of $C$ with respect to $P$.

\par We prove a dichotomy for $\MOT$ problems with these costs: $\MOT$ is polynomial-time solvable for general submodular costs, but is intractable for general supermodular costs. This aligns with our message that repulsive structure is a source of intractability in $\MOT$, since submodular costs are a prototypical example of ``attractive'' costs, whereas supermodular costs   are often used to model ``repulsive'' costs (see, e.g., \citep{borodin2012max,prasad2014submodular}).

\begin{prop}\label{prop:hard-supmod}
	Consider a function $C : \{0,1\}^k \to \R$ given through oracle access for evaluation, and marginal probabilities $x_1, \dots, x_k \in [0,1]$.
	\begin{itemize}
		\item If $C$ is supermodular, then, assuming $\P \neq \NP$, there is no $\poly(k)$-time algorithm for $\MOT_C$.  Moreover, assuming $\NP \not\subset \BPP$, there is no $\poly(k,\Cmax/\eps)$-time algorithm for computing an $\eps$-approximation.
		\item If $C$ is submodular, then $\MOT_C$ is solvable in $\poly(k)$ time.
	\end{itemize}
\end{prop}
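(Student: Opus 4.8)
The plan is to handle the two bullets separately, each via the reduction toolkit of Section~\ref{sec:reduction} applied in the $n=2$ regime. For the \emph{submodular} case, I would invoke Theorem~\ref{thm:main:exact} in reverse: by the converse reduction cited in the ``Converse'' remark (from~\citep[\S3]{AltBoi20mot}), $\MOT_C$ reduces in polynomial time to $\MinO_C$, so it suffices to solve $\MinO_C(p)$ in $\poly(k)$ time for every $p \in \R^{2 \times k}$. But for $n=2$, the objective $\jvec \mapsto C_{\jvec} - \sum_i [p_i]_{j_i}$ of $\MinO_C(p)$ is, up to the sign convention on the ground set, a submodular set function (adding the modular term $-\sum_i [p_i]_{j_i}$ preserves submodularity), and minimizing a submodular function given by an evaluation oracle is solvable in polynomial time by the classical results on submodular minimization (e.g., Gr\"otschel--Lov\'asz--Schrijver via the Lov\'asz extension, or combinatorial algorithms). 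This gives the $\poly(k)$-time algorithm. In fact, as the remark following Lemma~\ref{lem:mot-cvx} already observes, for $n=2$ and submodular $C$ the function $\mu \mapsto \MOT_C(\mu)$ \emph{is} the Lov\'asz extension, so one could equivalently just state that $\MOT_C$ evaluates the Lov\'asz extension of $C$ at a point, which is computable with $k$ oracle calls by sorting coordinates.

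For the \emph{supermodular} case, the strategy is the same template used for the low-rank and pairwise-interaction applications: reduce a known $\NP$-hard combinatorial problem to $\MinO_C(0)$ for a supermodular $C$, then push through Theorems~\ref{thm:main:exact} and~\ref{thm:main:approx}. Here $\MinO_C(0)$ with $C$ supermodular on $\{0,1\}^k$ is exactly the problem of \emph{minimizing a supermodular function}, equivalently \emph{maximizing a submodular function}, which is $\NP$-hard (e.g., via max-cut: the cut function of a graph is submodular, and maximizing it is $\NP$-hard, and moreover $\NP$-hard to approximate to within a fixed constant factor by the PCP-based inapproximability of max-cut / max-$k$-cut). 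I would pick a supermodular $C$ whose minimization encodes such a hard instance, note that $\Cmax$ is polynomially bounded (e.g., $C$ takes integer values bounded by the number of edges), so that an additive approximation on the $\MinO_C$ side of accuracy $\Cmax/\poly(k)$ suffices to distinguish the cases, and then apply Theorem~\ref{thm:main:exact} to get the exact-hardness statement under $\P \neq \NP$ and Theorem~\ref{thm:main:approx} to get the $\eps$-approximation hardness under $\NP \not\subset \BPP$.

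The only genuinely substantive point is lining up the sign and indexing conventions: $\MinO$ is phrased as a minimization with the ``$n=2$'' coordinates indexed by $\{0,1\}$ playing the role of the ground set of a set function, and one must check that ``$C$ supermodular'' translates to ``$\MinO_C(p)$ is supermodular-function minimization'' (hence hard) while ``$C$ submodular'' translates to ``$\MinO_C(p)$ is submodular-function minimization'' (hence easy), with the modular shift by $p$ harmless in both cases. Everything else is bookkeeping: verifying $\Cmax = \poly(k)$ in the hard instance so that the $\eps \cdot \poly$ slack in Theorem~\ref{thm:main:approx} is tolerable, and citing the standard submodular-minimization and max-cut-inapproximability results as black boxes. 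I expect no real obstacle beyond getting these conventions stated cleanly.
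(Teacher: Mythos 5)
Your proposal is correct and takes essentially the same approach as the paper: the supermodular hardness is obtained exactly as in the paper by encoding \textsc{Max-Cut} as $\MinO_C(0)$ for an integer-valued supermodular $C$ (so $\Cmax \leq k^2$ and additive accuracy $\pm 0.49$ suffices) and then invoking Theorems~\ref{thm:main:exact} and~\ref{thm:main:approx}, while your noted alternative for the submodular case---that $\MOT_C$ with marginals $\Ber(x_i)$ evaluates the convex envelope, i.e.\ the Lov\'asz extension, of $C$ at $x$---is precisely the paper's argument. Your primary submodular route (the converse reduction to $\MinO_C(p)$ followed by general submodular-function minimization, the modular shift by $p$ being harmless) is a valid but heavier variant of the same idea.
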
 
\begin{proof}	
To show the intractability of computing the $\MOT_C$ problem~\eqref{eq:supmod-mot} with supermodular costs, consider the case in which $C$ is the supermodular function encoding the $\NP$-hard \textsc{Max-Cut} problem for a graph on $k$ vertices (refer to e.g., \citep{feige2011maximizing}). In this case, $C$ is integer-valued, so \textsc{Max-Cut} reduces to approximating $\MinO_C(0) = \min_{S \subseteq \{0,1\}^k} C(S)$ to within, say, $\pm 0.49$ additive error. Thus, $\MinO_C(0)$ is hard to approximate to $\pm 0.49$ error. 
Theorem \ref{thm:main:exact} reduces computing $\MinO_C(0)$ to exactly computing $\MOT_C$, hence exact computation of $\MOT_C$ is also $\NP$-hard. Furthermore, Theorem~\ref{thm:main:approx} uses a randomized algorithm to reduce computing $\MinO_C(0) \pm 0.49$ to $1/\poly(k)$-approximating $\MOT_C$, since the range of $C$ is bounded by $\Cmax \leq k^2$. Therefore, if $\NP \not\subset \BPP$, then there is no $\poly(k,\Cmax/\eps)$-time algorithm for $\eps$-approximation of $\MOT_C$.

On the other hand, if $C$ is submodular, then the problem is tractable. The proof hinges on the observation that the $\MOT_C$ problem~\eqref{eq:supmod-mot} with marginals $\mu_i = \Ber(x_i)$ is equivalent to the LP characterization of evaluating the convex envelope $F : [0,1]^k \to \R$ of $C$ at the point $x = (x_1, \dots, x_k)$~\citep[\S10.1]{GLSbook}. If $C$ is submodular, then $F$ can be evaluated in $O(k)$ evaluations of $C$ and $O(k \log k)$ additional processing time by leveraging the equivalence of $F$ to the Lov\'asz extension of $C$~\citep[\S10.1]{GLSbook}.
\end{proof}

\subsection{Application to Density Functional Theory}\label{ssec:repulsive:dft}

A popular application of $\MOT$ is to formulate a relaxation of the Density Functional Theory problem (DFT) from quantum chemistry.
We refer the reader to~\citep{CotFriKlu13} for an introduction of the $\MOT$ formulation of DFT, and sketch the simplest case below. In the simplest version of the $\MOT$ relaxation, we are given $k$ distributions corresponding to $k$ electron clouds in space, and the objective is to couple the electron clouds in a way that minimizes the expected potential energy of the electron configuration. Suppose the electron clouds are given as distributions $\mu_1,\ldots,\mu_k$ supported on $x_1,\ldots,x_n \in \RR^3$; again, for convenience of notation, we think of $\mu_1,\ldots,\mu_k$ as distributions in the simplex $\Delta_n$, and write $[\mu_i]_j$ to mean the mass of $\mu_i$ on $x_j$. Then, the $\MOT$ relaxation of DFT is to compute a minimum-cost coupling of $\mu_1,\ldots,\mu_k$, with cost given by the Coulomb potential \begin{equation}C_{j_1,\ldots,j_k} = \sum_{1 \leq i < i' \leq k} \frac{1}{\|x_{j_i} - x_{j_{i'}}\|_2}.\label{eq:coulombpot}\end{equation}
This is a repulsive cost that encourages tuples $(j_1,\ldots,j_n) \in [n]^k$ such that $x_{j_1},\ldots,x_{j_n}$ are spread as far as possible, since the Coulomb potential decreases as two electrons move farther apart. Despite significant algorithmic interest, provable polynomial-time algorithms have not yet been found.
We conjecture that in fact solving $\MOT$ with the Coulomb potential is $\NP$-hard.
\begin{conj}\label{conj:coulombhardness}
Assuming $\P \neq \NP$, there is no $\poly(n,k)$-time algorithm solving $\MOT_C$ with the Coulomb potential cost \eqref{eq:coulombpot}.
\end{conj}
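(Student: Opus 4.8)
\textbf{Proof proposal for Conjecture~\ref{conj:coulombhardness}.}

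The plan is to follow the now-standard template of this paper: by Theorem~\ref{thm:main:exact}, it suffices to show that $\MinO_C$ is $\NP$-hard for the Coulomb cost, and as in all the preceding applications I would try to do this already for the case $p = 0$, i.e., to show that computing
\[
\min_{j_1,\dots,j_k \in [n]} \ \sum_{1 \le i < i' \le k} \frac{1}{\|x_{j_i} - x_{j_{i'}}\|_2}
\]
given points $x_1,\dots,x_n \in \R^3$ is $\NP$-hard. Note that the minimizing tuple will always repeat as few indices as possible, since coinciding points contribute $+\infty$; so for inputs in ``general position'' this is essentially the problem of choosing a $k$-subset of $\{x_1,\dots,x_n\}$ minimizing the sum of inverse pairwise distances — a min-energy / best-packing selection problem. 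The natural strategy is to reduce from a known $\NP$-hard combinatorial problem, encoding a graph $G$ into a point configuration so that ``far apart'' corresponds to ``non-adjacent'' (as in Lemma~\ref{lem:lr-Tg}, but now the interaction $1/\|x-y\|$ must be made to behave like a $0/1$ adjacency indicator).

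First I would look for a gadget in which there is a small ``unit'' distance $d_0$ and a large distance $D \gg d_0$, placed so that a tuple has small Coulomb energy exactly when it is an independent set (or clique, after complementation) in $G$: put the $n$ candidate points on a sphere of radius $R$ in $\R^3$, at mutual chordal distances that take only two values, $d_0$ for adjacent pairs and $D$ for non-adjacent pairs, up to lower-order perturbations. Then the Coulomb energy of a $k$-tuple is $(\#\text{adjacent pairs})/d_0 + (\#\text{non-adjacent pairs})/D$, and by taking $D/d_0$ large enough (polynomially many bits suffice) minimizing the energy forces the number of adjacent pairs to its minimum — i.e. it decides whether $G$ has a $k$-subset that is an independent set, which is $\NP$-hard (it is the complement of $k$-clique). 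The same two-scale trick, on a log scale, should then give inapproximability via Theorem~\ref{thm:main:approx}, exactly paralleling the determinant-versus-log-determinant distinction in \S\ref{ssec:repulsive:det}.

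The main obstacle — and the reason this is stated as a conjecture rather than a proposition — is the geometric realizability of such a configuration in $\R^3$. Two-distance sets in $\R^3$ are severely constrained (a classical fact: a two-distance set in $\R^3$ has at most $6$ points), so one cannot literally demand that all adjacent pairs be at exactly one distance and all non-adjacent pairs at exactly another; one must instead allow a \emph{range} of distances in each band and argue that the energy gap between ``fewest adjacent pairs'' and ``one more adjacent pair'' still dominates the within-band fluctuation. Controlling this requires either (a) finding a graph family whose complement-$k$-clique problem stays $\NP$-hard even when one is only allowed point configurations with bounded ``aspect ratio,'' or (b) a more clever embedding — e.g. placing points along a curve, or on a fine grid, so that $1/\|x_{j_i}-x_{j_{i'}}\|_2$ reproduces a known $\NP$-hard objective (such as a geometric min-energy or a {\sc Max-Cut}-type problem) exactly rather than up to a gadget. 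Making either route rigorous, with polynomial bit-complexity coordinates, is precisely the gap, and is why we leave Conjecture~\ref{conj:coulombhardness} open.
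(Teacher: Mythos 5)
You have not proved the statement, and neither does the paper: this is stated as a conjecture precisely because no proof is known. Your proposal is an honest attack sketch with an explicitly conceded gap, so the only substantive question is whether the gap you name is the real one, and it is. The two-band gadget cannot work as literally described, since a two-distance set in $\R^3$ has at most $6$ points, so an arbitrary graph's adjacency pattern cannot be realized by ``adjacent pairs at $d_0$, non-adjacent pairs at $D$''; and the relaxed ``band'' version requires exactly the quantitative control (within-band fluctuation versus the $1/d_0 - 1/D$ gap, with polynomial bit-size coordinates and an $\NP$-hard source problem that survives the bounded-aspect-ratio restriction) that you have not supplied. There is also a structural obstruction worth noting that your sketch glosses over: the pure Coulomb interaction $1/r$ is strictly decreasing and purely repulsive, so it has no preferred finite separation with which to ``pin'' points of a gadget; this is markedly different from the determinant and supermodular costs where the paper's $\MinO_C(0)$ route succeeds by citing an existing combinatorial hardness result.

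For comparison, the paper's actual progress on this question is Propositions~\ref{prop:hard-dft} and~\ref{prop:hard-dft-approx}: it replaces the Coulomb potential by the Coulomb--Buckingham potential and then follows the same template you propose (reduce $\MOT_C$ hardness to hardness of $\MinO_C(0)$ via Theorems~\ref{thm:main:exact} and~\ref{thm:main:approx}), invoking the known $\NP$-hardness of ionic-crystal structure prediction \citep[Theorem 5]{adamson2020hardness}. That reduction leans crucially on features absent from the pure Coulomb cost: the attractive $-C/r^6$ term creates an energy well at a preferred distance, and the charge-neutrality/large-$M$ penalty enforces combinatorial structure, which together let a graph be encoded into the energy landscape. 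So your framework (Theorem~\ref{thm:main:exact} plus hardness of the $p=0$ minimization) is the right outer shell and matches the paper's intended route, but the missing geometric-encoding step is exactly the open content of Conjecture~\ref{conj:coulombhardness}; as written, the proposal should not be presented as a proof.
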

In this section, we make progress towards the conjecture by proving hardness of DFT with the related Coulomb-Buckingham potential, which is similar to the Coulomb potential, but has extra energy terms that grow as $1/r^6$ and $\exp(-\Theta(r))$. The Coulomb-Buckingham potential is popular for modeling the structures of ionic crystals~\citep{adamson2020hardness}, and is defined for two particles at distance $r$ with charges $q_1,q_2 \in \{-1,+1\}$ as:
$$U(r,q_1,q_2) = \begin{cases} M, & r = 0 \\ \frac{A_{q_1q_2}}{\exp(B_{q_1q_2} r)} - \frac{C_{q_1q_2}}{r^6} + \frac{q_1q_2}{r}, & r > 0 \end{cases},$$ where $A_{+1}, A_{-1}, B_{+1}, B_{-1}, C_{+1}, C_{-1}$ are constants determining the relative strengths of the terms in the interaction, and $M > 0$ is a large constant (that should be intuitively thought of as infinite) penalizing two ions being in the same place. Given ions with charges $q_j \in \{-1,+1\}$ at positions $x_j \in \RR^3$, the corresponding $\MOT$ cost is given by: \begin{equation}\label{eq:harddftcost} C_{j_1,\ldots,j_k} = \begin{cases} M, & \sum_{i \in [k]} q_{j_i} \neq 0 \\ \sum_{1 \leq i < i' \leq k} U(\|x_{j_i} - x_{j_{i'}}\|_2, q_{j_i}, q_{j_{i'}}), & \sum_{i \in [k]} q_{j_i} = 0 \end{cases}.\end{equation}

\begin{prop}[Hardness of DFT with Coulomb-Buckingham potential]\label{prop:hard-dft}
Assuming $\P \neq \NP$, then there is no $\poly(n,k)$-time algorithm that, given positions $x_1,\ldots,x_n \in \RR^3$, charges $q_1,\ldots,q_n \in \{-1,+1\}$, parameters $A_{\pm 1}, B_{\pm 1}, C_{\pm 1}, M > 0$, and marginals $\mu_1,\ldots,\mu_k \in \Delta_n$, solves $\MOT_C$ with cost $C$ given by \eqref{eq:harddftcost}.
\end{prop}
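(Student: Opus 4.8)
\emph{Proof plan.} The plan is to invoke our reduction toolkit: by Theorem~\ref{thm:main:exact} it suffices to prove that $\MinO_C$ is $\NP$-hard for the cost tensor $C$ in~\eqref{eq:harddftcost}, and, exactly as in our other applications (Remark~\ref{rem:pequalszero}), we will establish this already for the special case $p = 0$, i.e.\ for the problem of computing $\min_{\jvec \in [n]^k} C_{\jvec}$. The key observation is that, by the very definition~\eqref{eq:harddftcost}, this quantity \emph{is} the discrete energy-minimization problem for the Coulomb--Buckingham potential: one chooses a position $x_{j_i}$ (with its attached charge $q_{j_i}$) for each of $k$ ions, indices allowed to repeat, so as to minimize the total pairwise interaction $\sum_{1 \le i < i' \le k} U(\|x_{j_i} - x_{j_{i'}}\|_2, q_{j_i}, q_{j_{i'}})$, subject to overall charge neutrality $\sum_i q_{j_i} = 0$ (violating neutrality is ruled out by the penalty $M$, and colocating two ions triggers the $U(0,\cdot,\cdot) = M$ penalty). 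This is precisely the kind of combinatorial crystal-structure energy minimization shown to be $\NP$-hard in~\citep{adamson2020hardness}.

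Concretely, I would take an instance of the $\NP$-hard Buckingham-potential energy-minimization problem from~\citep{adamson2020hardness}---a polynomial-size family of candidate ion positions in $\R^3$, their charges, and the potential parameters $A_{\pm 1}, B_{\pm 1}, C_{\pm 1}$---and feed it directly into $\MinO_C(0)$ as the input $x_1, \dots, x_n$, $q_1, \dots, q_n$, $A_{\pm 1}, B_{\pm 1}, C_{\pm 1}$, with $k$ the number of ions to be placed. Two things then need checking: (i) choose $M$ large enough that no minimizer of $C$ is charge-imbalanced or places two ions at a common point---it suffices to take $M$ larger than $k^2$ times the largest magnitude of a single finite pairwise term, a quantity bounded by a polynomial in the bit-lengths of the input positions and parameters, so $M$ itself has $\poly(n,k)$ bit complexity as required by our conventions; and (ii) verify that the charge-neutral, distinct-position tuples $(j_1,\dots,j_k)$ are in value-preserving correspondence with the feasible configurations of the hard instance, so that an exact solver for $\MinO_C(0)$ solves the hard problem. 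Combining (i), (ii) with Theorem~\ref{thm:main:exact}, which reduces computing $\MinO_C(0)$ to exactly computing $\MOT_C$, yields the claimed $\NP$-hardness of $\MOT_C$.

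The main obstacle will be step (ii): $\MinO_C(0)$ is an \emph{unconstrained} minimization over all of $[n]^k$---a free choice of $k$ positions from the $n$ candidates with repetitions allowed---so one must match this to the precise formulation in~\citep{adamson2020hardness}. If that hardness statement is phrased with a prescribed multiset of ion types (or with ions constrained to a periodic lattice cell), a gadget is needed: replicate each candidate site into a few nearly-colocated copies carrying the appropriate charges, and combine the neutrality penalty $M$ with a mild offset so that every optimal tuple is forced to realize exactly the prescribed charge multiset, while ensuring the gadget creates no spurious low-energy configurations and keeps $n$, $k$, and all bit-lengths polynomial. One also needs the routine sanity check that the attractive $-C_{q_1 q_2}/r^6$ term cannot drive the energy unboundedly low---it cannot, since the positions are fixed points of the input, so $r$ is either bounded below by the input granularity or equals $0$ and triggers the $M$ penalty. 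Apart from these points, the bookkeeping is immediate from Theorem~\ref{thm:main:exact} and the standing bit-complexity conventions.
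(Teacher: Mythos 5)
Your proposal is correct and takes essentially the same route as the paper: restrict to a parameter regime (pairwise distances at least $1$, $A_{\pm 1},B_{\pm 1},C_{\pm 1} \leq \poly(n,k)$, and $2k^2(2+A_{+1}+A_{-1}+C_{+1}+C_{-1}) \leq M \leq \poly(n,k)$) so that the $M$-penalty rules out charge-imbalanced or index-repeating tuples, identify $\MinO_C(0)$ with the charge-neutral $k$-subset energy minimization over the candidate sites, and conclude via Theorem~\ref{thm:main:exact} and the $\NP$-hardness result of~\citep[Theorem 5]{adamson2020hardness}. The ``main obstacle'' you flag does not in fact arise: that hardness result is already phrased as choosing a charge-neutral size-$k$ subset of given charged positions in $\RR^3$, which is exactly the constrained problem your $M$-penalty argument reduces the unconstrained $\MinO_C(0)$ to, so no multiset/replication gadget is needed.
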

\begin{proof} 
For the proof, we show $\NP$-hardness even if the inputs $x_1,\ldots,x_n \in \RR^3$ are such that $\min_{1 \leq j' \leq j \leq n} \|x_j - x_{j'}\|_2 \geq 1$, $A_{\pm 1}, B_{\pm 1}, C_{\pm 1} \leq \poly(n,k)$, and $2k^2(2 + A_{+1} + A_{-1} + C_{+1} + C_{-1}) \leq M \leq \poly(n,k)$.
In the parameter regime above, we have $\Cmax = M \leq \poly(n,k)$, so by Theorem~\ref{thm:main:exact} and \ref{thm:main:approx}, it suffices to show that computing $\MinO_C(0)$ is $\NP$-hard.

Furthermore, in the parameter regime above, $\MinO_C(0)$ is equal to the following:
\begin{equation}\min \left\{\frac{1}{2}\sum_{j \in S, j' \in S \sm \{j\}} U(\|x_j - x_{j'}\|_2, q_j, q_{j'}) : S \subset [n], |S| = k, \sum_{j \in S} q_j = 0\right\}\label{eq:modifieddftobjective}\end{equation}
This optimization problem is $\NP$-hard by \citep[Theorem 5]{adamson2020hardness}.
\end{proof}

A similar hardness result also holds for approximate computation, stated next.

\begin{prop}[Approximation hardness of DFT with Coulomb-Buckingham potential]\label{prop:hard-dft-approx}
If $\NP \not\subset \BPP$, there is no $\poly(n,k,\Cmax/\eps)$-time algorithm computing an $\eps$-additive approximation to $\MOT_C$, where $C$ is as in Proposition \ref{prop:hard-dft}.
\end{prop}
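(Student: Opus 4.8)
The plan is to adapt the proof of Proposition~\ref{prop:hard-dft} in precisely the way that the proof of Proposition~\ref{prop:lr-hard-approx} adapted that of Proposition~\ref{prop:lr-hard}: rather than invoking the exact reduction (Theorem~\ref{thm:main:exact}) together with exact $\NP$-hardness of the combinatorial problem~\eqref{eq:modifieddftobjective}, we invoke the approximate reduction (Theorem~\ref{thm:main:approx}) together with an \emph{inapproximability} strengthening of that $\NP$-hardness. Concretely, we work in the same parameter regime fixed in the proof of Proposition~\ref{prop:hard-dft} --- namely $\min_{j \neq j'}\|x_j - x_{j'}\|_2 \geq 1$, $A_{\pm 1}, B_{\pm 1}, C_{\pm 1} \leq \poly(n,k)$, and $2k^2(2 + A_{+1}+A_{-1}+C_{+1}+C_{-1}) \leq M \leq \poly(n,k)$ --- so that $\Cmax = M \leq \poly(n,k)$ and $\MinO_C(0)$ is exactly the optimal value of~\eqref{eq:modifieddftobjective}.

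The steps are then as follows. First, I would use the fact that~\eqref{eq:modifieddftobjective} is $\NP$-hard to approximate to within some constant additive error $\gamma > 0$; this follows from the reduction of~\citep[Theorem 5]{adamson2020hardness}, either directly from an inapproximability statement therein or because their construction produces yes- and no-instances whose optimal energies are separated by a constant. Second, since $\Cmax \leq \poly(n,k)$ in our regime, Theorem~\ref{thm:main:approx} shows that an oracle computing $\MOT_C$ to additive accuracy $\eps$ with $\eps = \gamma/\poly(n,k)$ chosen small enough yields, in $\poly(n,k,\tfrac{\Cmax}{\eps}) = \poly(n,k)$ randomized time and oracle calls, a value within additive error $\eps \cdot \poly(n,k) \leq \gamma$ of $\MinO_C(0)$, i.e.\ of the optimum of~\eqref{eq:modifieddftobjective}. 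Third, composing: a hypothetical $\poly(n,k,\tfrac{\Cmax}{\eps})$-time randomized algorithm for $\eps$-approximating $\MOT_C$ would run in $\poly(n,k)$ time for this choice of $\eps$, and hence give a $\poly(n,k)$-time randomized algorithm that solves the $\gamma$-gap version of the $\NP$-hard problem~\eqref{eq:modifieddftobjective} with probability $2/3$, contradicting $\NP \not\subset \BPP$ after standard amplification.

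The main obstacle is the first step. Unlike the low-rank (Lemma~\ref{lem:lr-hard-min}) and supermodular (Proposition~\ref{prop:hard-supmod}) applications, where the combinatorial objective is integer-valued and exact $\NP$-hardness upgrades for free to constant-additive-gap hardness by rounding, here the energy in~\eqref{eq:modifieddftobjective} is a sum of real-valued terms ($1/r$, $1/r^6$, $\exp(-Br)$), so no rounding argument applies. I would therefore need to confirm that the reduction underlying~\citep[Theorem 5]{adamson2020hardness} actually certifies a gap bounded below by a constant (equivalently, by $1/\poly(n,k)$, which is all we need) between the energies of yes- and no-instances --- tracking how the gadget positions, charges, and parameters $A_{\pm1},B_{\pm1},C_{\pm1},M$ translate into energy differences --- rather than merely hardness of exact evaluation. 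If their stated theorem already asserts APX-type hardness, this step is immediate; otherwise it reduces to a careful quantitative re-reading of their construction, after which the rest of the argument goes through verbatim as above.
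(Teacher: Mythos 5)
Your proposal follows essentially the same route as the paper: invoke Theorem~\ref{thm:main:approx} in the parameter regime of Proposition~\ref{prop:hard-dft} (so $\Cmax \leq \poly(n,k)$), and reduce to showing that the combinatorial problem~\eqref{eq:modifieddftobjective} is $\NP$-hard to approximate to additive error $1/\poly(n,k)$; you also correctly identify that the real-valued energies block any integrality/rounding shortcut and that the crux is extracting a quantitative gap from the reduction of~\citep[Theorem 5]{adamson2020hardness}. The one piece you leave as a contingency is exactly what the paper's appendix supplies: their theorem asserts only exact hardness, and the paper verifies that with the parameters of their Lemma~1 the inequalities (1)--(3) hold with slack at least $1/n^{10}$, so that tracing their Lemmas~2--4 shows a $\pm 0.49/n^{10}$ additive approximation of~\eqref{eq:appadamson} already decides the independent-set instance. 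So your plan is correct and matches the paper, but the deferred ``careful quantitative re-reading'' is the actual content of the paper's proof rather than a formality.
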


The proof of Proposition~\ref{prop:hard-dft-approx} is identical to the proof of Proposition~\ref{prop:hard-dft} once we show that the $\MinO_C(0)$ problem is hard to solve approximately. While~\citep[Theorem 5]{adamson2020hardness} only shows hardness of exactly computing $\MinO_C(0)$, a slightly more careful analysis extends this hardness to approximate computation; details are deferred to the appendix.

\section{Neccesity of dual weights}\label{sec:necessity}
This section fleshes out the details for Remark~\ref{rem:pequalszero}. Namely, in \S\ref{sec:applications:lr}, \S\ref{sec:applications:pairwise}, and \S\ref{sec:applications:repulsive}, we showed that $\MOT_C$ was hard to compute for some family of costs $C$ by proving that $\MinO_C(0)$ was hard to compute. Here, we show that such arguments do not use the full power of Theorems~\ref{thm:main:exact} and \ref{thm:main:approx}:  we construct a family of cost tensors $C$ for which $\MOT_C$ is $\NP$-hard to compute yet $\MinO_C(0)$ is polynomial-time computable.

The cost family is as follows: given a $2$-SAT formula $\phi : \{0,1\}^k \to \{0,1\}$, define \begin{equation}C_{j_1,\ldots,j_k} = -\phi(j_1,\ldots,j_k).\label{eq:2satform}\end{equation}

\begin{prop}
Given a 2-SAT formula $\phi$, it is $\NP$-hard to solve $\MOT_C$ for the cost \eqref{eq:2satform}. However, $\MinO_C(0)$ can be computed in polynomial time.
\end{prop}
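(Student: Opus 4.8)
The plan is to establish the two halves of the statement separately. For the hardness of $\MOT_C$, I would invoke Theorem~\ref{thm:main:exact}, which reduces the problem to showing $\NP$-hardness of $\MinO_C(p)$ for some weight vector $p$ (not necessarily $p = 0$). Recall $\MinO_C(p)$ asks for $\min_{(j_1,\dots,j_k)} C_{j_1,\dots,j_k} - \sum_i [p_i]_{j_i} = \min_{(j_1,\dots,j_k)} -\phi(j_1,\dots,j_k) - \sum_i [p_i]_{j_i}$. The idea is to choose the $p_i$ so that the $-\sum_i [p_i]_{j_i}$ term forces the optimizer to concentrate on assignments of a prescribed Hamming weight, turning $\MinO_C(p)$ into \textsc{Max-2-SAT restricted to assignments of weight exactly $m$}, or more simply into a weighted counting/maximization version of 2-SAT that is $\NP$-hard. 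Concretely, I would encode an $\NP$-hard problem whose 2-SAT-like structure survives: e.g., take a graph $G$ and a 2-SAT formula $\phi$ whose clauses are $\neg(u \wedge v)$ for each edge $uv$ (so $\phi(x) = 1$ iff $x$ is the indicator of an independent set), then set $[p_i]_1 - [p_i]_0 = -\lambda$ for all $i$ so that $-\sum_i [p_i]_{j_i}$ rewards large $\sum_i j_i$; for a suitable $\lambda$, minimizing $-\phi(j) - \sum_i [p_i]_{j_i}$ is equivalent to finding a maximum independent set, which is $\NP$-hard. This shows $\MinO_C(p)$ is $\NP$-hard for this $C$ and this $p$, hence by Theorem~\ref{thm:main:exact} computing $\MOT_C$ is $\NP$-hard.

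For the second half—that $\MinO_C(0)$ is polynomial-time solvable—I would argue directly: $\MinO_C(0) = \min_{(j_1,\dots,j_k)\in\{0,1\}^k} -\phi(j_1,\dots,j_k)$, which equals $0$ if $\phi$ is unsatisfiable and $-1$ if $\phi$ is satisfiable. Since 2-SAT is decidable in polynomial time (indeed linear time, via the implication-graph / strongly-connected-components algorithm), $\MinO_C(0)$ can be computed in polynomial time. This is the trivial direction and requires essentially no work beyond citing 2-SAT's tractability.

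The main obstacle is the first half: I need to make sure the weight vector $p$ can be used to carve out an $\NP$-hard subproblem \emph{while keeping the cost tensor genuinely in the 2-SAT family} $C = -\phi$ for a fixed $\phi$ (the weights $p$ are inputs to $\MinO_C$, not part of $C$, so this is consistent with Remark~\ref{rem:pequalszero} and Definition~\ref{def:min}). The delicate point is choosing $\lambda$ so that the penalty term exactly implements "restrict to / reward" the right Hamming-weight stratum without overshooting: because $\phi \in \{0,1\}$ contributes at most $1$ to the objective, any $\lambda$ with $0 < \lambda$ makes the $\sum_i j_i$ term dominate, so one must be slightly careful that ties are broken in favor of satisfying $\phi$ (e.g.\ take $\lambda = 1 + \tfrac{1}{2k}$ scaled appropriately, or reduce from a hard problem like \textsc{Independent Set} where the weight term and the $\phi$ term do not conflict). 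I expect this bookkeeping to be routine once the reduction target is fixed; the conceptual content is simply that dual weights $p \neq 0$ are genuinely more powerful than $p = 0$, exactly as Remark~\ref{rem:pequalszero} advertises.
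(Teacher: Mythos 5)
Your route is essentially the paper's: the easy half is exactly 2-SAT satisfiability, and the hard half invokes Theorem~\ref{thm:main:exact} after choosing a nonzero, coordinate-uniform weight vector $p$ so that $\MinO_C(p)$ becomes a Hamming-weight optimization version of 2-SAT. The paper takes $p_1 = \cdots = p_k = (0,-1/(2k))$, so that $\MinO_C(p)$ computes the minimum weight of a satisfying assignment of $\phi$ (NP-hard); your independent-set variant (clauses $\neg(x_u \wedge x_v)$ for edges, reward $\lambda$ per coordinate set to $1$) is the mirror image and works equally well. One concrete correction, though: your claim that ``any $\lambda > 0$ makes the $\sum_i j_i$ term dominate'' is backwards, and the suggested fix $\lambda = 1 + \tfrac{1}{2k}$ would break the reduction. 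Since a satisfied $\phi$ contributes exactly $-1$ to the objective while the total linear reward is at most $\lambda k$, what you need is $\lambda k < 1$, e.g.\ $\lambda = \tfrac{1}{2k}$ (the same magnitude as the paper's weights, opposite sign): then every satisfying assignment has value $-1 - \lambda\|\jvec\|_1 \leq -1$, every non-satisfying one has value $-\lambda\|\jvec\|_1 \geq -\lambda k > -1$, so the global minimizer is a maximum independent set and $\MinO_C(p) = -1 - \lambda\,\alpha(G)$, from which $\alpha(G)$ is recovered. If instead $\lambda > 1$, flipping any coordinate from $0$ to $1$ strictly decreases the objective regardless of $\phi$, so the all-ones tuple is always optimal and the reduction says nothing. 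With the small-$\lambda$ choice your argument is complete and coincides with the paper's proof up to the sign of the tiebreaker.
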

\begin{proof}
Observe that $\MinO_C(0) = \min_{j_1,\ldots,j_k} C_{j_1,\ldots,j_k} = -\max_{j_1,\ldots,j_k} \phi(j_1,\ldots,j_k)$ is the satisfiability problem for $\phi$, which can be solved in polynomial-time since $\phi$ is a 2-SAT formula \citep{gusfield1992bounded}.

On the other hand, let $p = (p_1,\ldots, p_k) \in \RR^{2 \times k}$ be given by $p_1 = p_2 = \dots = p_k = (0,-1/(2k)) \in \RR^2$. Then $\MinO_C(p) = \min_{j_1,\ldots,j_k} -\phi(j_1,\ldots,j_k) - \sum_{i=1}^k [p_i]_{j_i} = -[\max_{j_1,\ldots,j_k} \phi(j_1,\ldots,j_k) - \|\vec{j}\|_1 / (2k)]$ solves the problem of finding the minimum weight of a satisfying assignment to $\phi$. This problem is $\NP$-hard~\citep{gusfield1992bounded}, hence $\MinO_C(p)$ is $\NP$-hard. Therefore $\MOT_C$ is $\NP$-hard by Theorem~\ref{thm:main:exact}.
\end{proof}

\paragraph*{Acknowledgements.}
We are grateful to Pablo Parrilo, Philippe Rigollet, and Kunal Talwar for insightful conversations.

\small
\addcontentsline{toc}{section}{References}
\bibliographystyle{abbrv}
\bibliography{mot_hard}{}

\normalsize
\appendix
\section{Proof of Proposition~\ref{prop:hard-dft-approx}}

\begin{proof}[Proof of Proposition~\ref{prop:hard-dft-approx}] By Theorem~\ref{thm:main:approx}, it suffices to prove that approximating $\MinO_C(0)$ up to $\leq \Cmax/\poly(n,k)$ additive error is $\NP$-hard. By the reasoning in the proof of Proposition~\ref{prop:hard-dft}, it suffices to prove that \begin{equation}\min \left\{\frac{1}{2}\sum_{j \in S, j' \in S \sm \{j\}} U(\|x_j - x_{j'}\|_2, q_j, q_{j'}) : S \subset [n], |S| = k, \sum_{j \in S} q_j = 0\right\}
\label{eq:appadamson}
\end{equation} is $\NP$-hard to $\Cmax/\poly(n,k)$-approximate. We modify the proof of \citep[Theorem 5]{adamson2020hardness} to prove this. Note that for the parameters $A_{\pm}, B_{\pm}, C_{\pm}$ chosen in Lemma 1 of \citep[Theorem 5]{adamson2020hardness}, we have $\Cmax = \poly(n,k)$, and also the inequalities (1)-(3) of \citep{adamson2020hardness} are met with a small polynomial gap of at least $1/n^{10}$ in the sense that for large enough $n$,
\begin{align*}
	&\frac{A_+}{e^{B_+ n}} - \frac{C_+}{n^6} + \frac{1}{n} + \frac{A_-}{e^{B- \sqrt{1+n^2}}} - \frac{C_-}{(1+n^2)^3} - \frac{1}{\sqrt{1+n^2}} \geq \left|\frac{A_-}{e^{B_-}} - C_- - 1\right| + 1/n^{10} \\
	&n^2 \left|\frac{A_+}{e^{B_+ r}} - \frac{C_+}{r^6} + \frac{1}{r} + \frac{A_-}{e^{B- \sqrt{1+r^2}}} - \frac{C_-}{(1+r^2)^3} - \frac{1}{\sqrt{1+r^2}}\right| \leq \left|\frac{A_-}{e^{B_-}} - C_- - 1\right| - 1/n^{10}, & r \geq \sqrt{2n} \\
	& \frac{A_+}{e^{B_+ r}} - \frac{C_+}{r^6} + \frac{1}{r} + \frac{A_-}{e^{B- \sqrt{1+r^2}}} - \frac{C_-}{(1+r^2)^3} - \frac{1}{\sqrt{1+r^2}} > 1/n^{10}, & r \geq \sqrt{2n}.
\end{align*}
Tracing through the reasoning of Lemmas 2, 3, and 4 of \citep{adamson2020hardness}, this gap implies that a $\pm 0.49/n^{10}$ approximation to the objective~\eqref{eq:appadamson} suffices to determine whether or not the construction in \citep[Theorem 5]{adamson2020hardness} encodes a graph with an independent set of size $k/2$. This proves NP-hardness of 
$\Cmax/\poly(n,k)$
approximation for~\eqref{eq:appadamson}.
\end{proof}

\end{document}